\DeclareMathAccent{\wtilde}{\mathord}{largesymbols}{"65}
\newtheorem{theorem}{Theorem}[section]
\newtheorem{proposition}{Proposition}[section]
\newtheorem{cor}[proposition]{Corollary}
\newtheorem{remark}{Remark}[section]
\newtheorem{as}{Assumption}[section]
\newcommand{\Arg}{\mbox{\rm Arg}\ }
\newcommand{\Arctan}{\mbox{Arctan}\ }
\newcommand{\Arccot}{\mbox{Arccot}\ }
\begin{document}
\title[Surface shear waves in a half-plane]{Surface shear waves in a half-plane with depth-variant structure%\thanks{Grants or other notes
%about the article that should go on the front page should be
%placed here. General acknowledgments should be placed at the end of the article.}
}
%%\subtitle{Surface wave on a half plane}

%%\titlerunning{Surface shear waves in a half-plane}        % if too long for running head

\author{Andrey Sarychev         \and
        Alexander Shuvalov \and Marco Spadini %etc.
}

%\authorrunning{Short form of author list} % if too long for running head

\address{A. Sarychev,   
              DiMaI, Universit\`a di Firenze,  
              v. delle Pandette 9, Firenze, 50127 Italy; asarychev@unifi.it} 
                           %Fax: +123-45-678910\\
                 %  \\
%             \emph{Present address:} of F. Author  %  if needed
           \address{ A. Shuvalov,   
             Universit\'e de Bordeaux,  UMR 5295, 
 Talence 33405 France;   alexander.shuvalov@u-bordeaux.fr} 
\address{M. Spadini,   
  DiMaI, Universit\`a di Firenze, 
  via S.Marta 3,   Firenze 50139 Italy; marco.spadini@unifi.it} 
 %%\date{Received: date / Accepted: date}
% The correct dates will be entered by the editor

\maketitle

\begin{abstract}
We consider the propagation of surface shear waves in a half-plane, whose shear
modulus $\mu(y)$ and  density $\rho(y)$ depend continuously on the depth coordinate $y$.
 The problem amounts to studying the parametric Sturm-Liouville equation on a half-line
with frequency $\omega$ and wave number $k$ as the parameters.
The Neumann (traction-free)  boundary condition and the requirement  of decay at infinity  are  imposed.
 The condition of solvability of the boundary value problem  determines the dispersion spectrum   $\omega(k)$ for  the corresponding surface wave.
 We establish  the  criteria for non-existence of surface waves  and
 for the existence of $N(k)$ surface wave solutions,  with $N(k) \to \infty$  as  $k \to \infty$.
The most intriguing result is a possibility of  the  existence of infinite number   of solutions, $N(k)=\infty$,  for any given  $k$.
These three options are conditioned by the properties of  $\mu(y)$ and  $\rho(y)$.

%% We look for the solutions which decay at infinite depth.
 %%Include keywords, PACS and mathematical
%%subject classification numbers as needed.
\keywords{functionally graded medium  \and surface shear waves \and parametric Sturm-Liouville problem }
% \PACS{PACS code1 \and PACS code2 \and more}
% \subclass{MSC code1 \and MSC code2 \and more}
\end{abstract}

\section{Introduction}
\label{intro}
We consider the 2D wave equation
\begin{equation}\label{wehat}
   \rho \hat u_{tt}-\nabla \left(M \nabla \hat u\right)=0
\end{equation}
in a half-plane $\{(x,y): \ y>0\}$.
One imposes the Neumann boundary condition
\begin{equation}\label{bchat}
\hat u'_y|_{y=0}=0.
\end{equation}
We seek  the  solutions,  which decay at infinity:
 \begin{equation}\label{dinfhat}
   \lim_{y \to +\infty} \hat u =0.
 \end{equation}
We make an assumption  of  $M, \rho$  depending  only on $y$ and $M$ being  a scalar matrix $M=\mu(y)\mbox{Id}$.

 In the physical context, this is a problem of the existence of  surface shear  waves in functionally graded semi-infinite media with a traction-free boundary. Surface acoustic waves find numerous applications in various fields extending from seismology to microelectronics.
Their localization near the surface (decay into the depth) makes them extremely advantageous in non-destructive material testing for detection of surface and subsurface defects (surface wave sensors). Small wavelength of surface waves enables their application in filters and transducers used in modern miniature devices \cite{BGKP}.
Functionally graded materials may be of natural origin (e.g. bones), they may occur due to material aging, or they may be specially manufactured to realize desired combination of physical properties \cite{MA}.

 Under the adopted assumptions equation \eqref{wehat}
 reads as
 \begin{equation}\label{y_dep}
   \rho(y) \hat u_{tt}=\mu (y)  \hat u_{xx}+\partial_y(\mu(y)\partial_y \hat u).
 \end{equation}
 We will seek solutions  of the form
 \[\hat u(x,y)=u(y)e^{i(kx-\omega t)}.\]
 Substituting $\hat u(x,y)$ into \eqref{y_dep} and  cancelling   $e^{i(kx-\omega t)}$,  one
 gets   for $u(y)$ the  equation
 \[\rho(y)u(y)(-\omega^2)=\mu(y)u(y)(-k^2)+\partial_y(\mu(y)\partial_y u(y)).\]
  We denote the (total) derivative $\partial_y$ by $'$
arriving at the second-order linear differential equation
 \begin{equation}\label{ueq}
  \left(\mu(y)u'(y)\right)'+(\omega^2 \rho(y)-k^2 \mu(y))u=0.
 \end{equation}
%%we maintain the condition at infinity \eqref{dinf}.
The boundary conditions \eqref{bchat} and \eqref{dinfhat} formulated for $u(y)$ become
\begin{eqnarray}
% \nonumber to remove numbering (before each equation)
  u'(0)=0, \label{bc}\\
  \lim_{y \to +\infty} u(y) =0. \label{dinf}
\end{eqnarray}
We assume both functions $\rho(y)$ and $\mu(y)$ to be continuous and  positive on $[0,+\infty)$; further assumptions are introduced in Sections \ref{sec:1},\ref{section_parametric}.

  It is known that for generic $ \omega , k$ there are no solutions of \eqref{ueq},  which satisfy  both boundary
  conditions \eqref{bc} and \eqref{dinf}. For many
  bi-parametric problems  the set of admissible $\omega , k$ is known to be a union of a number of {\it eigencurves} (\cite[Ch.6]{AM}) in $\omega k$-plane,
  called in the physical context {\it dispersion curves}.
Our goal is to characterize the pairs $(\omega , k)$, for which the solutions of the boundary value problems \eqref{ueq}-\eqref{bc}-\eqref{dinf} exist.

  The situation is elementary, when $\rho(y), \mu(y)$ are constants,
and is relatively uncomplicated, when  $\rho(y),\mu(y)$ become constants on an interval   $[y_s , +\infty)$.
 In Section \ref{sec_res} we briefly consider the latter \textit{homogeneous substrate case} as a particular case of our general treatment.
There has been  a number of studies, which either treat the problem asymptotically for high $\omega , k$ or assume that  $\rho(y), \mu(y)$ are periodic  \cite{AB}, \cite{Ti}, \cite{XFK}, \cite{Shu1}, \cite{Shu2}.
We address the   case, where no bounds for  $\omega , k$   are imposed  and  neither periodicity nor (piecewise) constancy for  $\rho(y), \mu(y)$  is   assumed.

The paper has the following structure. Section~\ref{sec:1} contains the auxiliary results.  In Section \ref{section_parametric} we formulate the  corresponding
 parametric Sturm-Liouville problem on a half-line and introduce the assumptions for the material coefficients $\rho(y), \mu(y)$.
  Section \ref{sec_res} contains the formulations of  the main results, which are   the  criteria for non-existence of surface waves (Theorem \ref{neg_monot})  and
 for the existence of $N(k)$ surface wave solutions,  with $N(k) \to \infty$  as  $k \to \infty$ (Theorem \ref{conv_ginf}).
The most intriguing result is a possibility of  the  existence of infinite number   of solutions, $N(k)=\infty$,  for any given  $k$ (Theorem \ref{div_ginf}).
These three options are conditioned by the properties of  $\mu(y)$ and  $\rho(y)$.
        Section \ref{sec_proofs} contains the  proofs of the above Theorems.

%%\section{Section title}

\section{Second-order linear ordinary differential equation on a half-line: auxiliary results}
\label{sec:1}

%%We collect here  preliminary material (mostly  known), which will be used later on.

\subsection{Second-order linear equation}

Equation \eqref{ueq} is a particular type of the second-order linear  differential equation
 \begin{equation}\label{SL}
  \left(\mu(y)u'(y)\right)'+\gamma(y)u=0
 \end{equation}
defined on a half line $[0,+\infty)$.

\begin{as}\label{as_mu}
We assume from now on that the function $\mu(s) \geq \underline{\mu}>0$ on  $[0,+\infty)$, is  continuous on  $[0,+\infty)$ and
admits a  finite limit $\lim_{s \to \infty}\mu(s)=\mu_\infty>0.\ \qed$
\end{as}

The following substitution of the independent variable
\begin{equation}\label{subs_tau}
 \tau (y)=\int_0^y \left(\mu(s)\right)^{-1}ds
\end{equation}
is invertible ($\tau (y)$ is strictly growing) and satisfies the relation:
$\frac{d}{d\tau}=\mu \frac{d}{dy}$.

By Assumption \ref{as_mu},  the functions $\mu(s), \left(\mu(s)\right)^{-1}$ are both bounded on $[0,+\infty)$ and  therefore
the function $\tau(y)$ and its inverse $y(\tau)$ are Lipschitzian. Besides $\int_0^{+\infty}\left(\mu(s)\right)^{-1}ds=\infty$, i.e.
$\tau(y)$ is Lipschitzian homeomorphism of $[0,+\infty)$ onto $[0,+\infty)$.

This substitution transforms  \eqref{SL}  into  the standard form
\begin{equation}\label{1SL}
 \frac{d^2 \bar u}{d\tau^2}+\bar \gamma (\tau) \bar u(\tau)=0,
\end{equation}
where $\bar u(\tau)=u(y(\tau))$  and  $\bar \gamma (\tau)= \mu (y(\tau))\gamma (y(\tau))$. %%$\bar u (\tau),\bar \mu (\tau), \bar \gamma (\tau)$ result from substitution  of %%the inver

Another  form of \eqref{SL} is its representation as  a system of first-order differential equations for the variables $u(y), \ w(y)=\mu(y)u'(y)$:

\begin{eqnarray}\label{sys}
  u'(y)=\frac{w(y)}{\mu(y)}, \ w'(y)=-\gamma (y)u(y) ,
  \end{eqnarray}
or in the matrix form for $Z =\left(
                          \begin{array}{c}
                            w \\
                            u \\
                          \end{array}
                        \right)$:
\begin{equation}\label{heq_mat}
  \  Z '(y) =\frac{d Z}{dy} =C(y)Z(y), \ \ C(y)=\left(
                                       \begin{array}{cc}
                                         0 & -\gamma (y)  \\
                                         \left(\mu(y)\right)^{-1} & 0 \\
                                       \end{array}
                                     \right).
\end{equation}

Performing substitution \eqref{subs_tau},  we transform \eqref{heq_mat} into the system for the function $\bar Z(\tau)=Z(y(\tau))$
\begin{equation}\label{heq_tau}
  \frac{d\bar Z}{d\tau} =\bar C(\tau)\bar Z(\tau), \ \ \bar C(\tau)=\left(
                                       \begin{array}{cc}
                                         0 & -\bar \gamma (\tau)  \\
                                         1 & 0 \\
                                       \end{array}
                                     \right).
\end{equation}

We concentrate   for a moment   on the asymptotic properties of the solutions of \eqref{SL}, \eqref{1SL}, \eqref{heq_mat}, \eqref{heq_tau} at infinity.

\subsection{Asymptotic properties of solutions for  $y \to +\infty$}
%%We start with the  vectorial form \eqref{heq_mat}.
%%and scalar form
%%form of the equation \eqref{SL} or \eqref{1SL}.

The matrix of the coefficients $C(y)$  of  the system \eqref{heq_mat}  for each $y$ is  traceless, hence,  by the Liouville formula,  the Wronskian of a fundamental system of solutions
is constant in $y$. This  precludes a possibility of having two independent solutions, which would both tend to zero at infinity.

Important characteristics of the asymptotics of the system at infinity are determined by the limit of the coefficient matrix for $y \to +\infty$  (whenever it exists):
\[C_\infty=\lim_{y \to +\infty}C(y)=\left(
                                       \begin{array}{cc}
                                         0 & -\gamma_\infty   \\
                                         (\mu_\infty)^{-1} & 0 \\
                                       \end{array}
                                     \right),\]
where $\mu_\infty=\lim_{y \to +\infty}\mu(y), \ \gamma_\infty=\lim_{y \to +\infty}\gamma(y)$.

Whenever $\det C_\infty= \gamma_\infty (\mu_\infty)^{-1}> 0$, or,  equivalently, $\gamma_\infty > 0$,  the eigenvalues of $C_\infty$ are purely  imaginary and  one can conclude (see Proposition \ref{ellip} below)  the non-existence of a solution of system \eqref{heq_mat} with $\lim_{y \to +\infty} u(y)=0$.

{\it If on the contrary $\det C_\infty <0$, then the eigenvalues of $C_\infty$ are real numbers of opposite signs and the existence of a  solution of \eqref{heq_mat} with $\lim_{y \to \infty} u(y)=0$ is guaranteed under some additional conditions on the functions}
$\mu(y), \gamma (y)$.

Note that $\det \bar C_\infty=\mu_\infty^2 \det C_\infty$ and therefore a similar conclusion holds for the solutions of  system
\eqref{heq_tau}.

Later on   we  use a number of   results  which    follow the quasi-classical or WKB-appro\-xi\-mation paradigm (\cite[Ch.2]{Fed}).
We formulate the results  for   equations \eqref{SL} or \eqref{1SL}.
%%\begin{equation}\label{eq_WKB}
%%  u''(\tau)+ \gamma (\tau)  u(\tau)=0.
%%\end{equation}

%%\begin{proposition}\label{WKB}
%%  Consider the equation \eqref{eq_WKB}.
%%   i) Assume that in \eqref{1SL} $\bar \gamma(\tau) <0$ on an interval
%%   $(\tau_0,+\infty )$.   Then the equation  possesses a couple of solutions $u_\pm$ of the form:
%%\begin{equation}\label{eq_non_osc}
%%  \[u_\pm (\tau) = |\bar \gamma(\tau)|^{-1/4}e^{\pm \Phi(\tau_0,\tau)}(1+\varepsilon_\pm(\tau)),\]
%%\end{equation}
%%where \begin{equation}\label{eq_Phi}
%% \Phi(\tau_0,\tau)=\int_{\tau_0}^{\tau}\sqrt{|\bar \gamma(\xi)|}d\xi ,
%%\end{equation}
%%and
%%\begin{equation}\label{eq_eps_pm}
%%  \lim_{\tau \to +\infty}\varepsilon_\pm(\tau)=0.
%%\end{equation}

%%If in addition
%%\begin{equation}\label{eq_3/2}
 %% \[\bar \gamma'(\tau)|\bar \gamma(\tau)|^{-\sfrac{3}{2}} \to 0 \ \mbox{and}\  \Phi(\tau_0,\tau) \to +\infty , \ \mbox{as} \ \tau \to +\infty ,\]
%%\end{equation}
%%then $\lim_{\tau \to +\infty}u_-(\tau)= 0$.

%%ii) Assume that in \eqref{1SL} $\bar \gamma(\tau) >0$ on an interval
%%    $(\tau_0,+\infty )$.   Then the equation
 %%    possesses a couple of solutions $u_\pm$ of the form:
%%\begin{equation}\label{eq_osc}
%%  u_\pm (\tau) = \left(\bar \gamma(\tau)\right)^{-\sfrac{1}{4}}e^{\pm i\Phi(\tau_0,\tau)}(1+\varepsilon_\pm(\tau)),
%%\end{equation}
%%where $\Phi$ and $\varepsilon_\pm$ are as in  \eqref{eq_Phi} and \eqref{eq_eps_pm}.
%%\end{proposition}

%%Next result (see \cite[\S XI.9]{Har}) is, in a sense, a corollary of Proposition \ref{WKB}i), but  it is at times more convenient for use.

Let us introduce  linear space $\mathcal{G}$ of the coefficients $\gamma(y)$ of equations \eqref{SL} as
a space of functions   $\gamma(y)=\gamma_\infty+ \beta (y)$, with
$\gamma_\infty$ being a constant and $\beta (y)$ a continuous function  on $[0,+\infty)$ such that:
%%\[\gamma(\tau)=\beta_0+\beta(\tau),\]
\begin{eqnarray}
% \nonumber to remove numbering (before each equation)
  \lim_{y \to +\infty}\beta (y)=0, \label{betainf}  \\
  \int_{0}^{+\infty}|\beta(y)|dy < \infty . \label{beta_L1}
 \end{eqnarray}
Evidently $\lim_{y \to \infty}\gamma(y)=\gamma_\infty$.

Introduce in $\mathcal{G}$  the norm
\begin{equation}\label{norma}
  \|  \gamma(\cdot)\|_{01}=|\gamma_\infty|+\|\beta(\cdot)\|_{C^0}+\|\beta(\cdot)\|_{L_1}.
\end{equation}
For each $y_0 \in [0,+\infty)$ we define a subset $\mathcal{G}^-(y_0) \subset \mathcal{G}$ (respectively   $\mathcal{G}^+(y_0) \subset \mathcal{G}$),  consisting  of the functions $\gamma(y)=\gamma_\infty+ \beta (y)$,  for which $\gamma_\infty <0$ (respectively $>0$) and $\gamma_\infty+ \beta (y)<0$  (respectively $>0$) on $[y_0,+\infty)$. Both $\mathcal{G}^-(y_0)$ and $\mathcal{G}^+(y_0) $
are open subsets of $\mathcal{G}$ in the above introduced norm.
It is easy to verify that substitution \eqref{subs_tau} transforms the space $\mathcal{G}$ into itself and  the sets  $\mathcal{G}^-(y_0), \mathcal{G}^+(y_0) $
 into $\mathcal{G}^-(\tau(y_0)), \mathcal{G}^+(\tau(y_0)) $,  correspondingly.

%%\begin{lemma}
%%Under (Lipschitzian) substitution \eqref{subs_tau} the space  $\mathcal{G}$ is transformed into itself and the set
%%$\mathcal{G}^-(y_0)$
%%is transformed into $\mathcal{G}^-(y(\tau(y_0))$.
%%\end{lemma}
%%\begin{proof}
%%To prove the first statement we just need to verify that for $\tau(y)$ as in \eqref{subs_tau} $\int_0^{+\infty}\beta(\tau)d\tau < \infty
%%\Rightarrow  \int_0^{+\infty}\beta(\tau)d\tau $    as in The second statement is obvious.
%%\end{proof}

%%Evidently any coefficient  $\bar \gamma(\cdot) \in  \mathcal{G}^-(y_0)$ satisfies
%%the assumptions of Proposition \ref{asymp} with $\lambda^2 = -\beta_0$. Therefore for such $\bar \gamma (\cdot)$ there exists a %%couple of solutions \eqref{u_asymp_pm} with $\lambda=(-\beta_0)^{\sfrac{1}{2}}$. One gets

 The first classical result regards the so called non-elliptic case for equation \eqref{1SL},
 where the coefficient  $\bar \gamma(\cdot) \in  \mathcal{G}^-(\tau_0)$.

\begin{proposition} (see \cite[\S 6.12]{Bel}]).
\label{asymp}
  Consider   the equation
  \begin{equation}\label{Har_SL}
    u''(\tau) +\bar \gamma (\tau)u=u''(\tau) +\left(-\lambda^2+\beta(\tau)\right)u=0, \ \lambda >0.
  \end{equation}
Assume   $\beta(\tau)$  to be continuous and to satisfy \eqref{betainf}.
Then for  equation \eqref{Har_SL} there exist $\tau_0 \geq 0$,  constants $c_1,c_2,d_1, c'_1,c'_2,d'_1$ and two  solutions $u_{\lambda}(\tau), u_{-\lambda}(\tau)$ such that
$\forall \tau \geq \tau_0$:
\begin{eqnarray}
% \nonumber to remove numbering (before each equation)
  c'_2exp\left[\lambda \tau-d'_1\!\int_{\tau_0}^{\tau}\!\!\!\left|\beta (\theta)\right|d\theta\right] \leq u_\lambda (\tau)\leq \nonumber \\  \leq c'_1 exp\left[\lambda \tau+d'_1\!\int_{\tau_0}^{\tau}\!\!\!\left|\beta (\theta)\right|d\theta\right],
  \label{u+} \\
  c_2exp\left[-\lambda \tau-d_1\!\int_{\tau_0}^{\tau}\!\!\!\left|\beta (\theta)\right|d\theta\right] \leq u_{-\lambda} (\tau) \leq \nonumber \\  \leq c_1exp\left[-\lambda \tau+d_1\!\int_{\tau_0}^{\tau}\!\!\!\left|\beta (\theta)\right|d\theta\right].
  \label{u-}
\end{eqnarray}

%%\end{equation}
%%Vice versa if $\beta$ has constant sign and equation
%%\eqref{Har_SL} possesses solutions, which satisfy \eqref{u_asymp0}, then
%% \eqref{betainf} holds.
\end{proposition}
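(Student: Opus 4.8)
The plan is to work with the reduced equation~\eqref{Har_SL} and to build the two solutions through the Riccati substitution, which delivers positivity and the two-sided bounds in one stroke. First I would invoke~\eqref{betainf} to fix $\tau_0\ge 0$ so large that $\sup_{\tau\ge\tau_0}|\beta(\tau)|\le\eps$ for a small $\eps$ to be chosen; everything is then estimated on $[\tau_0,+\infty)$, the compact piece $[0,\tau_0]$ only affecting the constants. For a nowhere-vanishing solution $u$ of~\eqref{Har_SL} set $r=u'/u$, so that $r'=\lambda^2-\beta-r^2$; writing $r=-\lambda+\xi$ for the prospective decaying solution turns this into $\xi'=2\lambda\xi-\xi^2-\beta$. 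Its linearization at $\xi=0$ has the \emph{unstable} eigenvalue $2\lambda$, so the orbit that stays near $0$ is the one obtained by integrating backwards from $+\infty$, i.e. the fixed point of
\[\xi(\tau)=\int_\tau^{+\infty}e^{-2\lambda(s-\tau)}\bigl(\xi(s)^2+\beta(s)\bigr)\,ds.\]

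Second, I would solve this equation by the contraction principle on the ball $\{\xi\in C([\tau_0,+\infty)):\ \|\xi\|_{C^0}\le\delta\}$. Since $\int_\tau^{+\infty}e^{-2\lambda(s-\tau)}\,ds=1/(2\lambda)$ uniformly in $\tau$, the map is well defined, maps the ball into itself and is contracting once $\eps,\delta$ are small enough, the quadratic term $\xi^2\le\delta^2$ remaining subordinate. I stress that this step uses only $\beta\to 0$ and \emph{not} $\beta\in L_1$: the exponential kernel is integrable regardless of the decay rate of $\beta$. The fixed point produces $r=-\lambda+\xi$ and the positive solution $u_{-\lambda}(\tau)=\exp\bigl(\int_{\tau_0}^\tau r\bigr)=e^{-\lambda(\tau-\tau_0)}\exp\bigl(\int_{\tau_0}^\tau\xi\bigr)$, normalized by $u_{-\lambda}(\tau_0)=1$.

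Third, the estimate~\eqref{u-} is read off from this representation, since for a positive $u_{-\lambda}$ it suffices to bound $\int_{\tau_0}^\tau\xi$. Integrating the fixed-point identity over $[\tau_0,\tau]$ and applying Tonelli splits the double integral into a part supported on $[\tau_0,\tau]$, which contributes $\tfrac1{2\lambda}\int_{\tau_0}^\tau|\beta|$ plus a quadratic term $O(\delta)\int_{\tau_0}^\tau|\xi|$, and a tail over $[\tau,+\infty)$, which is bounded by $(\delta^2+\eps)/(4\lambda^2)$ uniformly in $\tau$ thanks to the exponential weight. A bootstrap on $\int_{\tau_0}^\tau|\xi|$ (licit because $\delta<2\lambda$) then gives $\bigl|\int_{\tau_0}^\tau\xi\bigr|\le d_1\int_{\tau_0}^\tau|\beta|+\mathrm{const}$ with $d_1$ slightly larger than $1/(2\lambda)$, which is exactly~\eqref{u-} after absorbing the constant and the factor $e^{\lambda\tau_0}$ into $c_1,c_2$. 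Finally I would obtain the growing solution by reduction of order, $u_\lambda(\tau)=u_{-\lambda}(\tau)\int_{\tau_0}^\tau u_{-\lambda}(s)^{-2}\,ds$, which is independent of $u_{-\lambda}$ (their Wronskian equals $1$); inserting the bounds~\eqref{u-} into this formula and integrating, using monotonicity of $\int_{\tau_0}^s|\beta|$ to pull the exponential correction out of the integral, yields~\eqref{u+} with primed constants $c_1',c_2',d_1'$ (one may take $d_1'=3d_1$).

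The step I expect to be the main obstacle is the third one: extracting the precise multiplicative factor $\exp(\pm d_1\int|\beta|)$ when $\beta$ is merely $o(1)$ and possibly non-integrable. The backward integral makes $\xi(\tau)$ depend on $\beta$ to the right of $\tau$, so one must verify that this tail contributes only a bounded constant (absorbed into $c_1,c_2$) rather than polluting the exponent; this is precisely where the exponential kernel, and not any decay rate of $\beta$, does the work. Concurrently the quadratic term $\xi^2$ must be shown to perturb the linear-in-$\beta$ bound only by a controllable amount, which is what forces the smallness of $\delta$, hence the choice of a large $\tau_0$.
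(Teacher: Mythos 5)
The paper itself gives no proof of Proposition \ref{asymp}: it is quoted as a known result from \cite[\S 6.12]{Bel}, so there is no internal argument to compare yours against; your proof has to stand on its own, and in substance it does. Your route --- the Riccati substitution $r=u'/u$, $r=-\lambda+\xi$ with $\xi'=2\lambda\xi-\xi^2-\beta$, selection of the bounded branch at the unstable root $-\lambda$ via the backward integral equation $\xi(\tau)=\int_\tau^{+\infty}e^{-2\lambda(s-\tau)}\bigl(\xi(s)^2+\beta(s)\bigr)ds$, a contraction on a small ball of $C([\tau_0,+\infty))$, the Tonelli splitting giving $\bigl|\int_{\tau_0}^\tau\xi\bigr|\le d_1\int_{\tau_0}^\tau|\beta|+\mathrm{const}$ with $d_1=1/(2\lambda-\delta)$, and reduction of order for the growing solution --- is the classical argument of the cited source (Riccati equation plus integral inequalities), only with the two constructions in the opposite order: the traditional presentation builds the growing solution first, from the \emph{stable} root $+\lambda$ by forward integration, and then gets the decaying one as $u_\lambda(\tau)\int_\tau^{+\infty}u_\lambda(s)^{-2}ds$, whereas you attack the unstable root directly, which is harder but which your backward fixed point handles correctly. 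You are also right on the structural point that matters for the paper: only \eqref{betainf} is used, the exponential kernel making all integrals converge without \eqref{beta_L1}, which is reserved for the Corollary; and your bookkeeping (interior term $\tfrac{1}{2\lambda}\int_{\tau_0}^\tau|\beta|+\tfrac{\delta}{2\lambda}\int_{\tau_0}^\tau|\xi|$, tail bounded by $(\delta^2+\eps)/(4\lambda^2)$ uniformly in $\tau$, bootstrap licit since $\delta<2\lambda$ and $\int_{\tau_0}^\tau|\xi|$ is finite for each finite $\tau$) is correct.

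One slip needs repair. Your growing solution $u_\lambda(\tau)=u_{-\lambda}(\tau)\int_{\tau_0}^\tau u_{-\lambda}(s)^{-2}ds$ vanishes at $\tau=\tau_0$, so the lower bound in \eqref{u+} with a positive constant $c'_2$ is false at and near $\tau_0$: carrying out your own estimate produces the extra factor $1-e^{-2\lambda(\tau-\tau_0)}$, which degenerates at the left endpoint. The fix is one line: either assert the estimates only for $\tau\ge\tau_0+1$ and rename $\tau_0+1$ as the $\tau_0$ of the Proposition (the discrepancy between $\int_{\tau_0}^\tau|\beta|$ and $\int_{\tau_0+1}^\tau|\beta|$ is a fixed constant, absorbed into $c'_1,c'_2$), or replace $u_\lambda$ by $u_{-\lambda}(\tau)\bigl(1+\int_{\tau_0}^\tau u_{-\lambda}(s)^{-2}ds\bigr)$, which is still a solution, equals $1$ at $\tau_0$, and obeys the same two-sided bound after adjusting constants. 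With that correction the proof is complete.
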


\begin{cor}(see \cite[\S XI.9]{Har}).
  Assume the assumptions of Proposition \ref{asymp} to hold and $\beta(\cdot)$ to satisfy \eqref{beta_L1}.
Then the solutions $u_\lambda, u_{-\lambda}$ satisfy
%\begin{equation}\label{u_asymp_pm}
\[ u_{\lambda} \sim \frac{u_{\lambda}'}{\lambda} \sim e^{\lambda \tau}, \  u_{-\lambda} \sim -\frac{u_{-\lambda}'}{\lambda} \sim e^{-\lambda \tau}\]
%%\label{u_asymp+}
%\end{equation}
as $\tau \to +\infty$.
\end{cor}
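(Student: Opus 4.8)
The plan is to \emph{upgrade} the two-sided exponential bounds of Proposition~\ref{asymp} to the sharper asymptotic equivalences asserted here, using the extra integrability hypothesis \eqref{beta_L1} together with a variation-of-parameters representation of the solutions against the unperturbed basis $e^{\pm\lambda\tau}$. First I would record what \eqref{beta_L1} buys: since $\int_{\tau_0}^{\tau}|\beta|\le\|\beta\|_{L_1}<\infty$ for every $\tau$, the factors $\exp[\pm d_1'\int_{\tau_0}^{\tau}|\beta|]$ and $\exp[\pm d_1\int_{\tau_0}^{\tau}|\beta|]$ occurring in \eqref{u+}--\eqref{u-} are bounded above and below by positive constants. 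Hence Proposition~\ref{asymp} already delivers $C^{-1}e^{\lambda\tau}\le u_\lambda(\tau)\le Ce^{\lambda\tau}$ and $C^{-1}e^{-\lambda\tau}\le u_{-\lambda}(\tau)\le Ce^{-\lambda\tau}$ on $[\tau_0,+\infty)$ for some $C>0$; equivalently $u_\lambda e^{-\lambda\tau}$ and $u_{-\lambda}e^{\lambda\tau}$ stay between two positive constants. This boundedness is the sole input needed for the refinement.

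Next, reading \eqref{Har_SL} as the perturbation $u''-\lambda^2 u=-\beta u$ of the constant-coefficient equation with basis $e^{\pm\lambda\tau}$ (Wronskian $-2\lambda$), I would write every solution as $u=c_1(\tau)e^{\lambda\tau}+c_2(\tau)e^{-\lambda\tau}$ with
\[
c_1'(\tau)=-\tfrac{1}{2\lambda}\,\beta(\tau)u(\tau)e^{-\lambda\tau},\qquad
c_2'(\tau)=\tfrac{1}{2\lambda}\,\beta(\tau)u(\tau)e^{\lambda\tau}.
\]
For $u_\lambda$ the dominant coefficient is $c_1$: by the bound of the previous step the integrand $\beta u_\lambda e^{-\lambda\tau}$ is dominated by $C|\beta|\in L_1$, so $c_1(\tau)$ converges to a limit $A$, positive because of the lower bound. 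It remains to show the cross term is negligible, namely $c_2(\tau)e^{-2\lambda\tau}\to0$; estimating $|c_2'|\le C|\beta|e^{\lambda\tau}$ and splitting $e^{-2\lambda\tau}\int_{\tau_0}^{\tau}|\beta(s)|e^{2\lambda s}\,ds$ at a large fixed $T$ yields the claim from $\int_T^{\infty}|\beta|\to0$. Thus $u_\lambda(\tau)\sim Ae^{\lambda\tau}$, and differentiating the representation, $u_\lambda'(\tau)=\lambda c_1e^{\lambda\tau}-\lambda c_2 e^{-\lambda\tau}$, gives $u_\lambda'/\lambda\sim Ae^{\lambda\tau}\sim u_\lambda$ — the first chain, after normalizing $u_\lambda$ so that $A=1$ (or reading $\sim$ up to a positive constant).

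For $u_{-\lambda}$ I would argue symmetrically, now taking $c_2$ as the dominant coefficient: the integrand $\beta u_{-\lambda}e^{\lambda\tau}$ is again $O(|\beta|)\in L_1$, so $c_2(\tau)\to B>0$. The delicate point is the growing component $c_1(\tau)e^{\lambda\tau}$: one pins down the recessive solution by prescribing $c_1(\tau)=\tfrac{1}{2\lambda}\int_\tau^{\infty}\beta u_{-\lambda}e^{-\lambda s}\,ds$, so that $c_1(\infty)=0$, and then the bound $u_{-\lambda}e^{-\lambda s}=O(e^{-2\lambda s})$ forces $c_1(\tau)e^{\lambda\tau}=O\!\big(e^{-\lambda\tau}\int_\tau^{\infty}|\beta|\big)=o(e^{-\lambda\tau})$, so it does not disturb the $e^{-\lambda\tau}$ asymptotics. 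This gives $u_{-\lambda}\sim Be^{-\lambda\tau}$ and, through the derivative formula, $-u_{-\lambda}'/\lambda\sim Be^{-\lambda\tau}\sim u_{-\lambda}$, which is the second chain.

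The step I expect to be the genuine difficulty is precisely the control of the \emph{subdominant} contributions — the $c_2e^{-\lambda\tau}$ term for $u_\lambda$ and the $c_1e^{\lambda\tau}$ term for $u_{-\lambda}$. Here $L_1$-integrability alone is not enough at the level of the integrand, and one must combine it with the decay $\beta\to0$ via the integral-splitting estimate above (and, for the recessive solution, with the correct choice of the tail integral fixing $c_1(\infty)=0$). Everything else is dominated-convergence bookkeeping layered on top of Proposition~\ref{asymp}.
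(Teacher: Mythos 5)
Your proof is correct. Note that the paper itself offers no argument for this corollary: it is quoted as a classical fact with the reference \cite[\S XI.9]{Har}, so there is no in-paper proof to match against. What you have written is essentially a reconstruction of the standard (Levinson-type asymptotic integration) argument that underlies the cited result: expand the solution against the unperturbed basis $e^{\pm\lambda\tau}$ by variation of parameters, use \eqref{beta_L1} together with the two-sided bounds \eqref{u+}--\eqref{u-} (whose exponential correction factors become bounded constants under \eqref{beta_L1}) to give the dominant coefficient a finite limit, and kill the subdominant term --- by the splitting estimate for $u_\lambda$, and by the tail representation $c_1(\tau)=\tfrac{1}{2\lambda}\int_\tau^\infty \beta u_{-\lambda}e^{-\lambda s}\,ds$ for the recessive solution. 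Three small refinements. First, the positivity of the limits $A$ and $B$ follows from the lower bounds of Proposition \ref{asymp} only \emph{after} the cross terms have been shown to vanish (since $u_\lambda e^{-\lambda\tau}=c_1+c_2e^{-2\lambda\tau}$), so that conclusion should be drawn at the end of the step, not before it. Second, your closing remark that $L_1$-integrability ``alone is not enough'' and must be combined with the pointwise decay $\beta\to 0$ is not accurate: the splitting estimate uses only the smallness of the tail $\int_T^\infty|\beta|$, which is itself a consequence of \eqref{beta_L1}; the hypothesis \eqref{betainf} plays no role in your argument beyond its use inside Proposition \ref{asymp}. Third, for $u_{-\lambda}$ it is cleaner to observe that $c_1(\infty)=0$ is \emph{forced} rather than ``prescribed'': since $|c_1'|\le \tfrac{C}{2\lambda}|\beta|e^{-2\lambda\tau}\in L_1$, the coefficient $c_1$ has a limit, and a nonzero limit would make $u_{-\lambda}$ grow like $e^{\lambda\tau}$, contradicting the upper bound in \eqref{u-}; the tail-integral formula then follows. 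With these cosmetic adjustments your argument is a complete and correct proof of the statement the paper leaves to the literature.
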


\begin{cor}
\label{bliz}
For each
$\tilde \gamma (\cdot)$ sufficiently close to  $\bar \gamma (\cdot)$
in the norm \eqref{norma} the equation
\[u''(\tau)+\tilde \gamma (\tau)u(\tau)=0\]
has a decaying solution.
\end{cor}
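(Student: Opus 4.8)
The plan is to read this corollary as a structural-stability statement and reduce it to the two preceding results rather than to re-run any WKB estimate. The essential observation is that, in the corollary immediately preceding this one, the existence of a decaying solution of $u''+\bar\gamma u=0$ was extracted \emph{solely} from the facts that $\bar\gamma\in\mathcal{G}$ and that its limit $\gamma_\infty=-\lambda^2$ is strictly negative; the decaying solution there is $u_{-\lambda}\sim e^{-\lambda\tau}$. Consequently it suffices to show that every $\tilde\gamma$ in a small enough $\|\cdot\|_{01}$-ball around $\bar\gamma$ is again an element of $\mathcal{G}$ whose limit is strictly negative, and then to quote that same corollary for $\tilde\gamma$.

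First I would record what closeness in the norm \eqref{norma} delivers. Writing $\tilde\gamma=\tilde\gamma_\infty+\tilde\beta$, the mere fact that $\|\tilde\gamma-\bar\gamma\|_{01}$ is being measured means $\tilde\gamma\in\mathcal{G}$, so $\tilde\beta$ is automatically continuous, tends to $0$ at infinity, and lies in $L^1$; that is, \eqref{betainf} and \eqref{beta_L1} hold for $\tilde\gamma$ for free. From the decomposition of the norm one has $|\tilde\gamma_\infty-\gamma_\infty|\le\|\tilde\gamma-\bar\gamma\|_{01}$, so as soon as $\|\tilde\gamma-\bar\gamma\|_{01}<\lambda^2$ we obtain $\tilde\gamma_\infty<0$, and I set $\tilde\gamma_\infty=-\tilde\lambda^2$ with $\tilde\lambda>0$.

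Next I would use the openness of $\mathcal{G}^-(\tau_0)$ stated earlier. Since $\gamma_\infty<0$ and $\beta(\tau)\to0$, there is a $\tau_0$ with $\bar\gamma<0$ on $[\tau_0,+\infty)$, i.e.\ $\bar\gamma\in\mathcal{G}^-(\tau_0)$; continuity of $\bar\gamma$ together with its negative limit even gives $\sup_{\tau\ge\tau_0}\bar\gamma(\tau)=:-\delta<0$. Keeping $|\tilde\gamma_\infty-\gamma_\infty|+\|\tilde\beta-\beta\|_{C^0}<\delta$ then forces $\tilde\gamma<0$ uniformly on $[\tau_0,+\infty)$, so $\tilde\gamma\in\mathcal{G}^-(\tau_0)$ for the \emph{same} $\tau_0$ — this is precisely the asserted openness of $\mathcal{G}^-(\tau_0)$ at work. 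With $\tilde\gamma\in\mathcal{G}^-(\tau_0)$ secured, I would apply Proposition \ref{asymp} and the corollary following it to $u''+(-\tilde\lambda^2+\tilde\beta)u=0$, producing $u_{-\tilde\lambda}\sim e^{-\tilde\lambda\tau}\to0$, the required decaying solution.

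I do not expect a genuine obstacle here: the whole content lies in transferring the hypotheses of Proposition \ref{asymp} to the perturbed coefficient. The only points meriting a word of care are (i) that the perturbed exponent $\tilde\lambda$ stays bounded away from $0$, which is guaranteed by the smallness of $|\tilde\gamma_\infty-\gamma_\infty|$ so that the decay rate does not degenerate, and (ii) that Proposition \ref{asymp} applied to $\tilde\gamma$ manufactures its own threshold and its own constants, which is harmless since only the existence of a single decaying solution is claimed and no uniformity across the ball is needed.
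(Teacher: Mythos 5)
Your proposal is correct and follows exactly the route the paper intends (the paper states Corollary \ref{bliz} without a written proof, as an immediate consequence of Proposition \ref{asymp} and the Hartman corollary): closeness in the $\|\cdot\|_{01}$-norm keeps $\tilde\gamma_\infty<0$ and membership in $\mathcal{G}$ supplies \eqref{betainf} and \eqref{beta_L1} for free, so the decaying solution $u_{-\tilde\lambda}\sim e^{-\tilde\lambda\tau}$ exists. Your extra verification that $\tilde\gamma\in\mathcal{G}^-(\tau_0)$ for the same $\tau_0$ is not strictly needed for the bare existence claim, but it is harmless and is precisely the openness observation the paper itself invokes later in the proof of Proposition \ref{decaying_continuity}.
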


Next we pass on to the   elliptic case  (see \cite[\S XI.8]{Har}; Corollary 8.1), where the coefficient  $\bar \gamma(\cdot) \in  \mathcal{G}^+(y_0)$.

\begin{proposition}\label{ellip}
Consider   the equation
  \begin{equation}\label{Har_SL+}
    u''(\tau) +\bar \gamma (\tau)u=u''(\tau) +\left(\lambda^2+\beta(\tau)\right)u=0,  \  \lambda >0
  \end{equation}
with  $\bar \gamma(\cdot) \in  \mathcal{G}^+(y_0)$.  Then for any real $a,b$ there is a unique solution
of equation \eqref{Har_SL+}  with the asymptotics
\begin{eqnarray}\label{u_osc}
% \nonumber to remove numbering (before each equation)
 u(\tau)=\left(a+o(1)\right)\cos \lambda \tau + \left(b+o(1)\right)\sin \lambda \tau ,   \\
 u'(\tau)=\left(-\lambda a+o(1)\right)\sin \lambda \tau + \left(\lambda b+o(1)\right)\cos \lambda \tau  , \nonumber
\end{eqnarray}
as $\tau \to +\infty$.
\end{proposition}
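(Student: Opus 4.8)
The plan is to treat the equation as an $L^1$-perturbation of the harmonic oscillator $u''+\lambda^2u=0$ and to solve it by variation of parameters combined with a contraction argument anchored at $\tau=+\infty$. First I would look for a solution in the modulated form
\[
u(\tau)=A(\tau)\cos\lambda\tau+B(\tau)\sin\lambda\tau ,
\]
imposing the standard variation-of-parameters constraint $A'\cos\lambda\tau+B'\sin\lambda\tau=0$, so that $u'=-\lambda A\sin\lambda\tau+\lambda B\cos\lambda\tau$. A direct computation then turns \eqref{Har_SL+} into the first-order system
\[
A'(\tau)=\frac{1}{\lambda}\beta(\tau)u(\tau)\sin\lambda\tau ,\qquad
B'(\tau)=-\frac{1}{\lambda}\beta(\tau)u(\tau)\cos\lambda\tau ,
\]
with $u=A\cos\lambda\tau+B\sin\lambda\tau$ substituted back in.

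Since the limits $A(\tau)\to a$, $B(\tau)\to b$ are prescribed at infinity, I would integrate this system backward from $+\infty$ and recast it as the fixed-point problem
\[
A(\tau)=a-\frac{1}{\lambda}\int_\tau^{\infty}\beta(s)u(s)\sin\lambda s\,ds ,\qquad
B(\tau)=b+\frac{1}{\lambda}\int_\tau^{\infty}\beta(s)u(s)\cos\lambda s\,ds ,
\]
posed for the pair $(A,B)$ in the Banach space of bounded continuous $\mathbb{R}^2$-valued functions on $[\tau_0,+\infty)$ with the supremum norm. Using $|u|\le|A|+|B|$ together with the hypothesis $\bar\gamma(\cdot)\in\mathcal{G}^+(y_0)$, which via \eqref{beta_L1} yields $\int_{\tau_0}^{\infty}|\beta(s)|\,ds<\infty$ with this tail tending to $0$ as $\tau_0\to\infty$, the associated integral operator is Lipschitz with constant $\tfrac{2}{\lambda}\int_{\tau_0}^{\infty}|\beta|$. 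Choosing $\tau_0$ large enough makes it a contraction, producing a unique bounded fixed point $(A,B)$.

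Third, I would check that this fixed point actually reconstructs a genuine solution of \eqref{Har_SL+}: continuity of $\beta$ makes $A,B$ of class $C^1$, the constraint $A'\cos\lambda\tau+B'\sin\lambda\tau=0$ holds by construction, and differentiating $u=A\cos\lambda\tau+B\sin\lambda\tau$ twice returns exactly $u''+(\lambda^2+\beta)u=0$. Because the tail integrals vanish as $\tau\to+\infty$, one obtains $A(\tau)=a+o(1)$ and $B(\tau)=b+o(1)$, which is precisely the asymptotics \eqref{u_osc}; the solution is then extended from $[\tau_0,+\infty)$ to all of $[0,+\infty)$ by the standard existence and uniqueness theory for the linear ODE. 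For uniqueness of the solution with prescribed $(a,b)$, I would observe that the same integral estimate shows that \emph{every} solution is bounded and converges to some pair $(A_\infty,B_\infty)$, so the map sending a solution to $(A_\infty,B_\infty)\in\mathbb{R}^2$ is a well-defined linear map on the two-dimensional solution space; the existence just proved makes it surjective, hence an isomorphism, and injectivity delivers uniqueness.

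The main obstacle I anticipate is essentially bookkeeping rather than conceptual: the integral equation must be anchored at $+\infty$ (anchoring at a finite point would instead prescribe initial data and lose control of the limits $a,b$), and one must be careful that it is the $L^1$-smallness \eqref{beta_L1} of $\beta$ on $[\tau_0,\infty)$, and not merely its decay \eqref{betainf}, that supplies the contraction constant. The pointwise decay of $\beta$ alone would, by a Levinson-type argument, still give boundedness and convergence of $A,B$, but the clean $o(1)$ conclusion and the contraction are driven by the integrability condition.
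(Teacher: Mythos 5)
The paper gives no proof of Proposition \ref{ellip} at all: it is quoted as a classical result, with a pointer to \cite[\S XI.8, Corollary 8.1]{Har}. So there is nothing internal to compare against, and the relevant question is simply whether your self-contained argument is sound --- it is. Your route (write $u=A\cos\lambda\tau+B\sin\lambda\tau$ under the variation-of-parameters constraint, convert to an integral equation anchored at $\tau=+\infty$, and use the smallness of $\int_{\tau_0}^{\infty}|\beta|$, guaranteed by \eqref{beta_L1}, to run a contraction on bounded continuous pairs $(A,B)$) is in fact the standard asymptotic-integration proof of this classical statement, essentially the one behind Hartman's cited corollary. The derived system for $(A',B')$ is correct, the Lipschitz constant $\tfrac{2}{\lambda}\int_{\tau_0}^{\infty}|\beta|$ is right, and the reconstruction step closing the loop back to \eqref{Har_SL+} is complete. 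Your uniqueness argument --- every solution has convergent amplitude data $(A_\infty,B_\infty)$, so the evaluation map on the two-dimensional solution space is linear and, being surjective by the existence part, is an isomorphism --- is also valid, provided you make explicit the Gronwall estimate showing boundedness and then convergence of $A,B$ for \emph{every} solution (this again uses \eqref{beta_L1}, since $|A'|+|B'|\le \tfrac{2}{\lambda}|\beta|\,(|A|+|B|)$ is integrable once boundedness is known). One peripheral inaccuracy: your closing remark that the pointwise decay \eqref{betainf} alone would ``still give boundedness and convergence of $A,B$'' is not true --- mere decay of $\beta$ permits resonant amplitude growth (Wigner--von Neumann-type perturbations), and Levinson-type conclusions need extra structure such as $\beta$ of bounded variation or $\beta\in L^1$. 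Since your actual proof relies on the integrability hypothesis throughout, this does not affect its correctness.
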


\subsection{Pr\"ufer's coordinates}\label{prufer}%%\hspace*{4cm}
%%There are several choices for so-called
We consider  Pr\"ufer's coordinates (see \cite{Har,AM}):
\begin{equation}\label{Pruf1}
r=(u^2+\mu^2u'^2)^{\frac{1}{2}}=(u^2+w^2)^{\frac{1}{2}}, \ \varphi =\mbox{Arctg}\ \frac{u}{w},
\end{equation}
%%another one corresponds to  modified Pr\"ufer's coordinates (\cite[\S XI.3]{Har}):
%%\begin{equation}\label{Pruf2}
%%R=\left(\mu \gamma u^2+w^2\right)^{\frac{1}{2}}, \ \psi = \mbox{Arctg}\ \frac{(\mu %%\gamma)^{1/2}u}{w},
%%\end{equation}
where again $w=\mu u'$.
For the vector function $Z =\left(
                          \begin{array}{c}
                            w \\
                            u \\
                          \end{array}
                        \right)$
we denote $\varphi$  by $\Arg Z$ (the choice of a continuous branch is done in a standard way).
In   coordinates \eqref{Pruf1}  system \eqref{SL} takes the form:
\begin{equation}\label{eq_Pruf1}
  r'=\left(\mu^{-1}(y)-\gamma(y)\right)r \sin \varphi \cos \varphi , \
  \varphi '=\gamma (y) \sin^2 \varphi +\mu^{-1}(y) \cos^2 \varphi ;
\end{equation}
note that the second equation is decoupled from the first one.

We list some  facts concerning the evolution of $\Arg Z(y)$. Recall that   $\mu(y)$ in  equation \eqref{SL} meets Assumption~\ref{as_mu}.
\begin{proposition}
\label{corollary_Prufer}
 \begin{enumerate}
  \item[i)] If $\gamma(y)\geq 0$ (respectively $\gamma (y) >0$) on an interval,
  then for a solution $Z(y)$ of \eqref{sys}  Pr\"ufer's angle variable $\varphi=\Arg Z$ is non-decreasing (increasing) on the interval.
  \item[ii)] If $\gamma(y)<0 $ on an interval $I$, then the first  and the third
   quadrants -- $\Arg Z \in (0,\sfrac{\pi}{2})$  and  $\Arg Z \in (\pi , \sfrac{3\pi}{2})$  --  are invariant       for  system \eqref{sys} on $I$.
  \item[iii)] For any $\gamma(y)$ there is a kind of weakened monotonicity for $\Arg Z$: if $\Arg Z(\tilde y) > m\pi$, then
$\Arg Z(y) > m\pi$ for any $y>\tilde y$.
\end{enumerate}
\end{proposition}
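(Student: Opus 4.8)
The plan is to work entirely with the decoupled scalar equation for the Pr\"ufer angle, namely the second equation in \eqref{eq_Pruf1},
\[
\varphi'=\gamma(y)\sin^2\varphi+\mu^{-1}(y)\cos^2\varphi,
\]
and to exploit that Assumption \ref{as_mu} guarantees $\mu^{-1}(y)>0$ throughout. For a nontrivial solution $Z$ one has $r=(u^2+w^2)^{1/2}>0$ on the whole half-line, since $u(y_0)=w(y_0)=0$ at a single point would force $Z\equiv0$ by uniqueness for the linear system \eqref{sys}. Hence $\varphi=\Arg Z$ is a well-defined $C^1$ function (the continuous branch being the one determined by the above ODE), and all three assertions reduce to a sign analysis of $\varphi'$.

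For (i), on an interval where $\gamma\geq0$ both summands of $\varphi'$ are non-negative, whence $\varphi'\geq0$ and $\varphi$ is non-decreasing. If moreover $\gamma>0$, then at least one summand is strictly positive for every value of $\varphi$: when $\cos\varphi\neq0$ the term $\mu^{-1}\cos^2\varphi>0$, while when $\cos\varphi=0$ one has $\sin^2\varphi=1$, so that $\gamma\sin^2\varphi=\gamma>0$. Thus $\varphi'>0$ and $\varphi$ is strictly increasing.

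For (ii), I would check the sign of $\varphi'$ on the boundary of the first quadrant $\varphi\in(0,\tfrac{\pi}{2})$. At $\varphi=0$ one has $\varphi'=\mu^{-1}(y)>0$, so the field points upward into the quadrant; at $\varphi=\tfrac{\pi}{2}$ one has $\varphi'=\gamma(y)<0$, so the field again points inward. Since $\varphi'$ depends on $\varphi$ only through $\sin^2\varphi$ and $\cos^2\varphi$, it is $\pi$-periodic in $\varphi$, and the identical inward-pointing behaviour holds on the boundary of the third quadrant $\varphi\in(\pi,\tfrac{3\pi}{2})$. A first-exit argument then shows that neither open quadrant can be left in forward time, giving the claimed invariance.

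For (iii), the decisive observation is the transversality of the levels $\varphi=m\pi$: there $\sin\varphi=0$ and $\cos^2\varphi=1$, so $\varphi'=\mu^{-1}(y)>0$ for \emph{every} $\gamma$. I argue by contradiction: if $\varphi(\tilde y)>m\pi$ but $\varphi(y_0)\leq m\pi$ for some $y_0>\tilde y$, then continuity and the intermediate value theorem produce a first value $y^*\in(\tilde y,y_0]$ with $\varphi(y^*)=m\pi$ and $\varphi>m\pi$ on $[\tilde y,y^*)$; but $\varphi'(y^*)>0$ forces $\varphi<m\pi$ just to the left of $y^*$, a contradiction. Hence $\varphi(y)>m\pi$ for all $y>\tilde y$. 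The computations here are elementary; the only point requiring care is the first-crossing/infimum bookkeeping in (ii) and (iii), where the strict sign of $\varphi'$ on the relevant level set is exactly what rules out a tangential return. I expect this transversality argument, rather than any analytic difficulty, to be the crux.
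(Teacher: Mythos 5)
Your proof is correct and follows essentially the same route as the paper's: all three claims are reduced to a sign analysis of the decoupled Pr\"ufer equation $\varphi'=\gamma\sin^2\varphi+\mu^{-1}\cos^2\varphi$, with invariance in (ii) and the weakened monotonicity in (iii) coming from the inward/transversal sign of $\varphi'$ on the lines $\varphi=m\pi$ and $\varphi=\sfrac{\pi}{2}+m\pi$. The paper merely sketches these facts in three lines; your first-exit and first-crossing arguments supply exactly the omitted bookkeeping.
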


Property i) follows from \eqref{eq_Pruf1}. So does  property ii), since,  according to \eqref{eq_Pruf1}, $\varphi'(\pi m) >0$ and  $\varphi'(\sfrac{\pi}{2}+\pi m)<0$  for negative
$\gamma$.  Property iii) follows from  the fact that in \eqref{eq_Pruf1} $\varphi '(m\pi)=\mu^{-1}(m\pi)>0$.

%%In coordinates \eqref{Pruf2} system \eqref{heq}  takes form:
%%\begin{equation}\label{eq_Pruf2}
%%dR=\frac{R}{2}\sin^2 \psi \  d\left(\ln \mu \gamma\right),
%%\ d\psi=\frac{\gamma^{1/2}}{\mu^{1/2}}dt+\left(\frac{1}{2}\sin \psi \cos \psi %%\right)d\left(\ln \mu \gamma\right).
%%\end{equation}

\subsection{Oscillatory equations}
\label{subsec_oscil}

Second-order linear differential equation is {\it oscillatory} (\cite[\S XI.5 ]{Har}) on $[0,+\infty)$ when its every solution has infinite number of zeros  on $[0,+\infty)$,  or equivalently the set of zeros of any solution has no upper limit, or equivalently for every solution  its Prufer's coordinate $\Arg Z$  (see the previous Subsection) satisfies
\[\limsup_{y \to +\infty} \Arg Z(y)=+\infty .   \]
An obvious example of oscillatory equation is \eqref{Har_SL+}, when the assumptions of Proposition \ref{ellip} are met.

We are interested in  conditions, under which the same  equation is oscillatory for vanishing  $\lambda$.
 We formulate the result   (see \cite[\S XI.5]{Har},
\cite[Ch.2,\S 6]{Fed})   for equation   \eqref{1SL}.

\begin{proposition}
Let $\bar \gamma(\cdot)$ in \eqref{1SL} be continuous  of bounded variation on every interval $[0,T]$,  $\bar \gamma(\tau)>0$ on some interval $[\tau_0,+\infty)$, and
\begin{eqnarray}
% \nonumber % Remove numbering (before each equation)
  \int_{\tau_0}^{+\infty}\left(\bar \gamma(\tau)\right)^{1/2}d\tau=+\infty ,  \label{inf_sq_root}  \\
  \int_{\tau_0}^T \left(\bar \gamma(\tau)\right)^{-1}|d \gamma (\tau)| = o\left(\int_{\tau_0}^{T}\left(\bar \gamma(\tau)\right)^{1/2}d\tau \right), \ \mbox{as} \
  T \to +\infty . \label{cond_o_int}
\end{eqnarray}
Then equation \eqref{1SL} is oscillatory.
\end{proposition}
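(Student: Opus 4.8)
The plan is to use a modified (Liouville--Green / WKB) Pr\"ufer transformation adapted to the weight $\bar\gamma$, turning the oscillation statement into the divergence of an accumulated phase. Since oscillation is a property of the behaviour at $+\infty$, I first restrict attention to the interval $[\tau_0,+\infty)$ on which $\bar\gamma(\tau)>0$. A nontrivial solution of \eqref{1SL} has only isolated zeros, so at most finitely many on the compact set $[0,\tau_0]$; hence it is \emph{oscillatory} on $[0,+\infty)$ if and only if its zeros accumulate at $+\infty$ on $[\tau_0,+\infty)$.

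On $[\tau_0,+\infty)$ set $w=(\bar\gamma)^{1/2}>0$ and introduce the modified Pr\"ufer variables $(r,\phi)$ by
\[
\bar u = \frac{r}{\sqrt w}\sin\phi,\qquad \frac{d\bar u}{d\tau}=r\sqrt w\,\cos\phi .
\]
A direct computation (differentiating both relations, eliminating $r'$, and using $\bar u''=-\bar\gamma\,\bar u=-w^2\bar u$) gives the decoupled phase equation
\[
\phi' = (\bar\gamma)^{1/2}+\frac{(\bar\gamma)'}{4\,\bar\gamma}\,\sin 2\phi ,
\]
while $r^2=(\bar\gamma)^{1/2}\bar u^2+(\bar\gamma)^{-1/2}(\bar u')^2>0$ stays strictly positive for a nontrivial solution. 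Consequently $\bar u(\tau)=0$ exactly when $\phi(\tau)\in\pi\mathbb{Z}$, so the equation is oscillatory precisely when $\phi(\tau)\to+\infty$.

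Integrating the phase equation from $\tau_0$ to $T$ yields
\[
\phi(T)-\phi(\tau_0)=\int_{\tau_0}^{T}(\bar\gamma)^{1/2}\,d\tau+\frac14\int_{\tau_0}^{T}\frac{(\bar\gamma)'}{\bar\gamma}\,\sin 2\phi\,d\tau .
\]
The first term tends to $+\infty$ by \eqref{inf_sq_root}, while the modulus of the second is bounded by $\tfrac14\int_{\tau_0}^{T}(\bar\gamma)^{-1}|d\gamma|$, which by \eqref{cond_o_int} is $o\!\left(\int_{\tau_0}^{T}(\bar\gamma)^{1/2}\,d\tau\right)$. Hence the right-hand side diverges and $\phi(T)\to+\infty$; since $\phi$ is continuous it meets every sufficiently large multiple of $\pi$, producing infinitely many zeros of $\bar u$, which proves the claim.

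The main obstacle is the low regularity: $\bar\gamma$ is only assumed continuous and of bounded variation, so $(\bar\gamma)'$ need not exist pointwise and the displayed phase equation must be read in the Riemann--Stieltjes sense, $d\phi=(\bar\gamma)^{1/2}\,d\tau+\tfrac14\sin2\phi\,\dfrac{d\bar\gamma}{\bar\gamma}$. The perturbation estimate then becomes $\bigl|\tfrac14\int_{\tau_0}^{T}\sin2\phi\,\tfrac{d\bar\gamma}{\bar\gamma}\bigr|\le\tfrac14\int_{\tau_0}^{T}(\bar\gamma)^{-1}|d\gamma|$, exactly the quantity controlled by \eqref{cond_o_int}; the bounded-variation hypothesis is what renders this Stieltjes integral finite and the transformation legitimate (note that $w=(\bar\gamma)^{1/2}$ and $\ln\bar\gamma$ are locally of bounded variation and bounded below on compacta, so the substitution is well defined). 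Making the modified-Pr\"ufer change of variables fully rigorous at this regularity---either directly via Stieltjes calculus, or by smoothing $\bar\gamma$ and passing to the limit while controlling the phase increment uniformly---is the only delicate point; the asymptotic bookkeeping above is then routine.
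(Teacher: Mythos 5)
The paper offers no proof of this proposition: it is stated as a classical result, with pointers to Hartman [\S XI.5] and Fedoryuk [Ch.~2, \S 6], so there is no internal argument to compare against. Your proof is, in substance, the standard one from those sources: the weighted (WKB) Pr\"ufer transformation, the decoupled phase equation $\phi'=\bar\gamma^{1/2}+\frac{\bar\gamma'}{4\bar\gamma}\sin 2\phi$, divergence of the leading term by \eqref{inf_sq_root}, and suppression of the correction term by \eqref{cond_o_int}. The phase equation, the bound on the perturbation integral, and the reduction of oscillation to $\phi(T)\to+\infty$ are all correct, and the argument applies to an arbitrary nontrivial solution, as oscillation requires.

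The one point you defer --- rigor of the transformation when $\bar\gamma$ is only continuous and of bounded variation --- is real but closes more cleanly than either remedy you sketch: you never need to \emph{solve} the Stieltjes phase equation, nor to smooth $\bar\gamma$. Given a nontrivial solution $\bar u$, define $\phi$ as a continuous branch of the argument of $z(\tau)=\bar u'(\tau)+i\,w(\tau)\bar u(\tau)$, where $w=\bar\gamma^{1/2}>0$ on $[\tau_0,+\infty)$; this is well defined since $\bar u,\bar u'$ never vanish simultaneously. As $\bar u,\bar u'$ are $C^1$ and $w$ is continuous and locally BV, $z$ is continuous, locally BV and nonvanishing, and Stieltjes calculus (chain and product rules for compositions of $C^1$ maps with continuous BV functions) gives
\[
d\phi \;=\; \mathrm{Im}\,\frac{dz}{z} \;=\; w\,d\tau \;+\; \frac{\bar u\,\bar u'}{(\bar u')^2+w^2\bar u^2}\,dw .
\]
Since $(\bar u')^2+w^2\bar u^2\ge 2w|\bar u\,\bar u'|$ and $|dw|=\frac{|d\bar\gamma|}{2\bar\gamma^{1/2}}$, this yields
\[
\bigl|\,d\phi - \bar\gamma^{1/2}\,d\tau\,\bigr| \;\le\; \tfrac14\,\bar\gamma^{-1}\,|d\bar\gamma| ,
\]
which is exactly your estimate, obtained with no limiting procedure. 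Zeros of $\bar u$ are the crossings of $\pi\mathbb{Z}$ by $\phi$, and these crossings are transversal (at them $\sin 2\phi=0$, so $d\phi=\bar\gamma^{1/2}d\tau>0$); hence $\phi(T)\to+\infty$, forced by \eqref{inf_sq_root} and \eqref{cond_o_int}, produces infinitely many zeros. With this substitution your proposal becomes a complete and fully rigorous proof.
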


%%It is known that the condition \eqref{cond_o_int} is fulfilled whenever $\bar \gamma(\cdot)$ is continuously differentiable and
%\begin{equation}\label{cond_3/2}
%% \[ \bar \gamma '(\tau)=o\left((\bar \gamma (\tau))^{3/2}\right), \ \mbox{as} \ \tau \to +\infty .\]
%\end{equation}

%%efer the reader to \cite{Har} for weaker conditions on $\beta(\cdot)$, still able to %%guarantee asymptotic properties \eqref{u_asymp0} for its  solutions.

\subsection{Hamiltonian form}\label{ham_form}
One can rewrite the system
\eqref{heq_mat} in the Hamiltonian form
\begin{equation}\label{heq}
  u'=\frac{\partial H}{\partial w}=\frac{w}{\mu(y)}, \ w'=-\frac{\partial H}{\partial u}=-\gamma (y)u
\end{equation}
with the Hamiltonian
%\begin{equation}\label{ham}
 \[ H= \frac{1}{2}\left(\frac{w^2}{\mu(y)}+\gamma(y)u^2\right).\]
  %%, w=\mu u', \  \gamma(y)=\omega^2 \rho(y)-k^2 \mu(y).
%\end{equation}
We denote by $\overrightarrow h$ the (Hamiltonian) vector field at the right-hand side of \eqref{heq}.

For  Pr\"ufer's angle $\varphi= \Arctan \left(\frac{u}{w}\right)$ there holds
%%\begin{equation}\label{arctan}
\[  \varphi '= \frac{-w'u+wu'}{u^2+w^2}=\frac{\gamma u^2+\sfrac{w^2}{\mu}}{u^2+w^2}= \frac{2H}{u^2+w^2}.\]
%%\end{equation}
The last equation is equivalent to  the differential equations \eqref{eq_Pruf1} for Pr\"ufer's  coordinate $\varphi$.

\begin{remark}
\label{der_uw}
 A simple but relevant (see \cite{Arn}) computation is provided by derivation of $u(y)w(y)$ along the trajectories of Hamiltonian system \eqref{heq}:
\begin{equation}\label{dup}
  \frac{d}{dy}\left(uw\right)=\partial_{\overrightarrow h}(uw)=\left(\partial_{\overrightarrow h}u\right)w+u\left(\partial_{\overrightarrow h}w\right)=-\gamma u^2+\frac{w^2}{\mu},
\end{equation}
wherefrom it follows, among other things, that $uw$ is nondecreasing (respectively increasing) on the intervals where $\gamma(y) \leq 0$ (respectively $\gamma(y) < 0$).
\end{remark}

 Proposition \ref{corollary_Prufer} and Remark \ref{der_uw}  allow us to  arrive at a conclusion on  qualitative behaviour of solutions on an interval,  where $\gamma (\tau) <0$ in \eqref{Har_SL}.

According to Proposition \ref{asymp},  there is a decaying solution, along which (according    to Remark \ref{der_uw}) $uw$ grows.
Hence the solution approaches the origin either in the second or in the fourth
quadrant, where $uw<0$.

\begin{proposition}\label{pi2pi} Let $\bar \gamma (\tau)$ meet the assumptions of Proposition \ref{asymp} and $\bar \gamma (\tau) < 0$ for  $\tau \in [\tau_0, +\infty)$. Then  the decaying solutions $\pm u(\tau)$  of \eqref{Har_SL} correspond to the solutions
$\pm Z(\tau)$ of \eqref{sys} with $\Arg Z (\tau) \in [\sfrac{\pi}{2},\pi]$,
$\Arg (-Z) (\tau) \in [\sfrac{3\pi}{2},2\pi]$  for $\tau \in [\tau_0, +\infty)$.
\end{proposition}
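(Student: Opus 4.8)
The plan is to confine the decaying trajectory to the region where the product $uw$ is negative and then isolate the second quadrant by a connectedness argument. In the $\tau$-variable the system \eqref{sys} reads $u'=w$, $w'=-\bar\gamma u$, and $\Arg Z$ is measured in the $(w,u)$-plane, so that the open second quadrant $\{w<0,\,u>0\}$ is $\{\Arg Z\in(\pi/2,\pi)\}$, the open fourth quadrant $\{w>0,\,u<0\}$ is $\{\Arg Z\in(3\pi/2,2\pi)\}$, and $\{uw<0\}$ is exactly the disjoint union of these two open quadrants. By Proposition \ref{asymp} and its first Corollary the decaying solution satisfies $u_{-\lambda}(\tau)\sim e^{-\lambda\tau}$ and $w(\tau)=u_{-\lambda}'(\tau)\sim-\lambda e^{-\lambda\tau}$; hence for all large $\tau$ one has $u>0$ and $w<0$, so $Z$ lies in the open second quadrant, and moreover $uw\sim-\lambda e^{-2\lambda\tau}\to 0^-$.

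I would then upgrade this to a global statement on the sign of $uw$. Since $\bar\gamma(\tau)<0$ on $[\tau_0,+\infty)$, Remark \ref{der_uw} (equation \eqref{dup}, with $\mu\equiv 1$ in the $\tau$-variable) gives $\tfrac{d}{d\tau}(uw)=-\bar\gamma u^2+w^2$, which is strictly positive because it vanishes only at the origin, a point the nontrivial solution never meets. Thus $uw$ is strictly increasing on $[\tau_0,+\infty)$. A strictly increasing function whose limit at $+\infty$ equals $0$ is everywhere negative: if $uw(\tau_1)\ge 0$ for some $\tau_1$, then $uw(\tau)>0$ for every $\tau>\tau_1$, contradicting $uw\to 0^-$. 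Hence $uw(\tau)<0$ for all $\tau\in[\tau_0,+\infty)$, so the whole trajectory lies in $\{uw<0\}$.

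The conclusion then follows by connectedness. The trajectory $\{Z(\tau):\tau\ge\tau_0\}$ is the continuous image of an interval, hence connected, and it is contained in $\{uw<0\}$, which is the disjoint union of the two open quadrants identified above. A connected set contained in such a union lies entirely in one component; since $Z$ is in the open second quadrant for large $\tau$, it remains there for all $\tau\ge\tau_0$, giving $\Arg Z\in(\pi/2,\pi)\subset[\pi/2,\pi]$. The same reasoning applied to $-Z$, the trajectory of $-u_{-\lambda}$, which lies in the open fourth quadrant for large $\tau$, yields $\Arg(-Z)\in(3\pi/2,2\pi)\subset[3\pi/2,2\pi]$.

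I expect the main obstacle to be the passage from an asymptotic placement to a global one, i.e.\ proving $uw<0$ on the entire half-line and not merely for large $\tau$. This rests on combining two ingredients: the strict monotonicity of $uw$ from Remark \ref{der_uw} (which uses $\bar\gamma<0$) and the sign of the limit $uw\to 0^-$, the latter requiring the derivative asymptotics $w\sim-\lambda e^{-\lambda\tau}$ from the Corollary, hence the integrability \eqref{beta_L1} of $\beta$. Once $uw<0$ is established everywhere, the quadrant confinement is a soft topological fact; the invariance of the first and third quadrants in Proposition \ref{corollary_Prufer}(ii) offers an alternative way to exclude those two quadrants, but the $uw$-sign argument disposes of them uniformly and also rules out the coordinate axes.
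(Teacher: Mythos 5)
Your proof is correct and takes essentially the same route as the paper: both rest on Remark \ref{der_uw} (with $\bar\gamma<0$ the product $uw$ is strictly increasing) combined with $uw\to 0$ along the decaying solution, forcing $uw<0$ on all of $[\tau_0,+\infty)$ and hence confinement of $Z$ to the second quadrant and of $-Z$ to the fourth; the paper states this in three lines, while you make the quadrant selection explicit via connectedness. One small caution: your appeal to the Corollary of Proposition \ref{asymp} (for $u_{-\lambda}\sim e^{-\lambda\tau}$, $w\sim-\lambda e^{-\lambda\tau}$) silently imports the integrability hypothesis \eqref{beta_L1}, which is not among the stated assumptions of the proposition; it can be avoided by noting that the bounds \eqref{u-} together with \eqref{betainf} already give $u>0$ and exponential decay, whence $w'=-\bar\gamma u$ is integrable, so $w$ has a finite limit that must vanish (otherwise $u$ would grow linearly), and therefore $uw\to 0$ as needed.
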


Other solutions, which  start in the same quadrants, escape to
either the first or the third quadrant, which, according to Proposition   \ref{corollary_Prufer}, are invariant for \eqref{Har_SL} whenever  $\gamma (\tau) <0$.
According to Remark \ref{der_uw},  the product $uw$ (positive in these quadrants)   grows along the respective trajectories, which  tend to infinity.

\subsection{Sturmian properties of trajectories}
We provide few  results from the Sturm theory. First result is classical (\cite{Fed},\cite{AM}, \cite[Ch. X,XI]{Har})
 and follows directly from the second equation \eqref{eq_Pruf1}.
%%is a collection of the results, which reflect
%%the relation between the spectrum of the linear differential operators
%% at the left-hand side of second-order equations
%% (under appropriate boundary conditions), the coefficients $\mu$, $\gamma , \ \bar \mu %%\bar \gamma $  of these operators  and
%%  oscillation and  numbers of zeros of their  eigenfuctions.

%%The following result estimates the number of zeros of the solution $u$ or equivalently %%the evolution of the Pr\"ufer angle coordinate.

%%\begin{prop}\cite[Sec. A3]{AM}
%%Let the function $f(x)$ in the equation
%%\begin{equation}\label{SL_standard}
%%  u''(y)+\gamma(y)u(y)=0,
%%\end{equation}
%%be Lipschitz continuous on $[0,+\infty)$ with Lipschitz constant $\ell$ and let %%$g(y)=\sqrt{\gamma_+(y)}$, where $\gamma_+(y)=\max(\gamma(y),0)$.

%%Then the number of zeros in $(a,b]$ of a nontrivial solution  of \eqref{SL_standard} %%differs from
%%\begin{equation}\label{integral_phi}
%%  \pi^{-1}\int_a^b g(y)dy,
%%\end{equation}
%%by not more that
%%\[ n+\frac{(b-a)^{3/2}\sqrt{\ell}}{\pi \sqrt{n}},   \]
%%where $n=[\ell^{1/3}(2\pi)^{-2/3}(b-a)+1]$.
%%\end{prop}
\begin{proposition}[comparison result]\label{SL_comparison}
  Consider a pair of  second-order equations
  \begin{equation}\label{SL_pair}
  % \nonumber to remove numbering (before each equation)
    \left(\mu(y)u'(y)\right)'+\gamma(y)u=0,  \
     \left(\mu(y)u'(y)\right)'+\tilde \gamma(y)u(y)=0,
  \end{equation}
where $\mu (y)$ meets Assumption \ref{as_mu} and
%\begin{equation}\label{gaga1}
\[\tilde \gamma(y) \geq \gamma (y), \ \forall y \in [y_0,+\infty).\]
%\end{equation}
If for $y_1 \geq y_0$ and a pair of vector solutions $Z =\left(
                          \begin{array}{c}
                            w \\
                            u \\
                          \end{array}
                        \right), \ \tilde Z =\left(
                          \begin{array}{c}
                            \tilde w \\
                            \tilde u \\
                          \end{array}
                        \right)$ of the first and the second equations
\eqref{SL_pair}
$$\Arg \tilde Z(y_1)= \Arg Z(y_1),$$
then
%\begin{equation}\label{+fifi1}
 \[\forall y \geq y_1: \ \Arg \tilde Z(y) \geq  \Arg Z(y)\]
%\end{equation}
and
\begin{equation}\label{-fifi1}
 \forall y \in [y_0, y_1]: \ \Arg \tilde Z(y) \leq  \Arg Z(y).
\end{equation}
\end{proposition}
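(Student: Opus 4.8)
The plan is to reduce the whole statement to the scalar first-order equation for the Pr\"ufer angle and then to invoke an elementary comparison principle for scalar ODEs. By the decoupled second equation in \eqref{eq_Pruf1}, the angles $\varphi(y)=\Arg Z(y)$ and $\tilde\varphi(y)=\Arg\tilde Z(y)$ solve, respectively,
\[
\varphi' = \gamma(y)\sin^2\varphi + \mu^{-1}(y)\cos^2\varphi =: F(y,\varphi),
\]
\[
\tilde\varphi' = \tilde\gamma(y)\sin^2\tilde\varphi + \mu^{-1}(y)\cos^2\tilde\varphi =: \tilde F(y,\tilde\varphi),
\]
where the continuous branches are pinned by the hypothesis $\tilde\varphi(y_1)=\varphi(y_1)$ (this is an equality of lifted real numbers, not merely mod $\pi$). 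Since $\sin^2\varphi\geq 0$ and $\tilde\gamma(y)\geq\gamma(y)$ on $[y_0,+\infty)$, the two right-hand sides obey the pointwise inequality $\tilde F(y,\varphi)\geq F(y,\varphi)$ for every $y\in[y_0,+\infty)$ and every $\varphi$. This monotone dependence of the angular velocity on the coefficient is essentially the entire content of the proposition; the remaining work is to convert it into the ordering of the angles themselves.

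First I would form the difference $\psi(y)=\tilde\varphi(y)-\varphi(y)$, so that $\psi(y_1)=0$, and split
\[
\psi' = \bigl[\tilde F(y,\tilde\varphi) - \tilde F(y,\varphi)\bigr] + \bigl[\tilde F(y,\varphi) - F(y,\varphi)\bigr].
\]
The second bracket, call it $g(y)$, is nonnegative by the previous step. For the first bracket I would apply the mean value theorem in the angular variable: since $\partial_\varphi \tilde F(y,\varphi) = \bigl(\tilde\gamma(y)-\mu^{-1}(y)\bigr)\sin 2\varphi$ is continuous, hence bounded on every compact $y$-interval, one gets $\tilde F(y,\tilde\varphi)-\tilde F(y,\varphi)=a(y)\psi(y)$ with $a(\cdot)$ locally bounded. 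Thus $\psi$ solves the linear scalar equation $\psi'=a(y)\psi+g(y)$ with $g\geq 0$ and $\psi(y_1)=0$.

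The integrating-factor representation then reads
\[
\psi(y) = \int_{y_1}^{y} \exp\!\left(\int_{s}^{y} a(r)\,dr\right) g(s)\, ds .
\]
Because the exponential weight is positive and $g\geq 0$, for $y\geq y_1$ the interval of integration is traversed in the positive direction and yields $\psi(y)\geq 0$, i.e.\ $\Arg\tilde Z(y)\geq\Arg Z(y)$; for $y\in[y_0,y_1]$ the orientation of the limits is reversed and the identical computation gives $\psi(y)\leq 0$, which is exactly \eqref{-fifi1}. For the unbounded range $y\geq y_1$ one runs this argument on each compact $[y_1,T]$ and lets $T\to+\infty$, so that the pointwise conclusion holds on all of $[y_1,+\infty)$.

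The argument is routine once the Pr\"ufer reduction is in place, so the only points requiring care are bookkeeping rather than genuine difficulty: one must keep the \emph{continuously lifted} angle throughout (otherwise the equality at $y_1$ and the subsequent inequalities are meaningless), and one must track the two opposite orientations of integration to obtain the reversed inequality on $[y_0,y_1]$. An alternative to the integrating factor is a first-crossing contradiction, examining the sign of $\psi'$ at a hypothetical last zero of $\psi$ after $y_1$; I prefer the linear-ODE representation above since it handles the degenerate case $\psi'=0$ at a contact point without extra discussion.
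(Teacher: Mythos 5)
Your proof is correct and follows essentially the same route as the paper, which states that the result ``follows directly from the second equation \eqref{eq_Pruf1}'': you simply flesh out the classical Pr\"ufer-angle comparison argument (the paper cites it to \cite{Fed}, \cite{AM}, \cite{Har} without writing out the details). Your handling of the continuous lift, the linear equation for the angle difference, and the reversed orientation on $[y_0,y_1]$ is exactly the standard argument the paper relies on.
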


We provide  analogue of the comparison result %%on a half-line,
(in particular,  of relation \eqref{-fifi1}) for the decaying solutions of  \eqref{SL_pair},  when $y_1=+\infty$.
%%We restrict ourselves to the case $\mu(\cdot) \equiv 1$.
  We were not able  to trace it  in the literature and  provide a  (short) proof.

\begin{proposition}{\bf (comparison result for decaying solutions on a half-line)}\label{ic_decaying_solution}
  Consider  the pair of second-order  equations \eqref{SL_pair}
%%\begin{eqnarray}\label{SL_la}
%% u''+\left(-\lambda^2+\beta(y)\right)u=0, \\ \tilde u''+\left(-\tilde \lambda^2+\tilde %%\beta(y)\right)\tilde u=0,
%%\label{SL_la_tilde}
%%\end{eqnarray}
with  the coefficient $\mu (y)$ meeting Assumption \ref{as_mu} and with   $\gamma (y), \tilde \gamma (y)$ belonging  to  $\mathcal{G}^-(y_0)$. Let
%%$\gamma (y), \tilde \gamma (y)$  satisfy the assumptions of  Proposition \ref{asymp} and
\begin{equation}\label{gaga0}
0 > \tilde \gamma(y) \geq \gamma (y), \ \forall y \in [y_0,+\infty).
\end{equation}
If $Z, \tilde Z$  are the decaying solutions of  equations \eqref{SL_pair},
%%\begin{equation}\label{eq_two_solutions_to0}
%% Z(y) \stackrel{y \to +\infty}{\longrightarrow} 0, \ \tilde Z(y) \stackrel{y \to %%+\infty}{\longrightarrow} 0,
%%\end{equation}
then %%there exists $\bar y_0 \geq y_0$ such that
\begin{equation}\label{in_cond_decay}
\Arg \tilde Z (y)  \leq \Arg  Z(y), \ \forall y \geq y_0.
\end{equation}
\end{proposition}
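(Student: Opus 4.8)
The plan is to pass to the standard form, track the single Prüfer angle shared by both equations, and obtain \eqref{in_cond_decay} by solving a scalar linear equation for the angular gap, integrated \emph{from infinity}.

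First I would apply substitution \eqref{subs_tau} to both equations of \eqref{SL_pair}. Since $\mu>0$, the ordering $0>\tilde\gamma\geq\gamma$ becomes $0>\tilde{\bar\gamma}:=\mu\tilde\gamma\geq\bar\gamma:=\mu\gamma$, both remaining in $\mathcal{G}^-(\tau(y_0))$, while the Prüfer angle $\varphi=\Arg Z$ is invariant under \eqref{subs_tau}; so it suffices to treat the standard-form equations $\tfrac{d^2\bar u}{d\tau^2}+\bar\gamma\bar u=0$ and its tilded analogue for $\tau\geq\tau_0$. By Proposition \ref{pi2pi} the decaying solutions can be normalized so that $\varphi(\tau),\tilde\varphi(\tau)\in[\pi/2,\pi]$ for $\tau\geq\tau_0$, and by the first Corollary to Proposition \ref{asymp} the ratio $u/w\to-1/\lambda$ with $\lambda=\sqrt{-\bar\gamma_\infty}>0$, whence $\varphi(\tau)\to\varphi_\infty=\pi-\Arccot\lambda\in(\pi/2,\pi)$, and likewise $\tilde\varphi\to\tilde\varphi_\infty=\pi-\Arccot\tilde\lambda$ with $\tilde\lambda=\sqrt{-\tilde{\bar\gamma}_\infty}$. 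As $\tilde{\bar\gamma}_\infty\geq\bar\gamma_\infty$ gives $\tilde\lambda\leq\lambda$ and $\lambda\mapsto\pi-\Arccot\lambda$ is increasing, the limiting angles already satisfy $\tilde\varphi_\infty\leq\varphi_\infty$, consistent with \eqref{in_cond_decay}.

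Next I would derive a closed equation for the gap $\psi:=\varphi-\tilde\varphi$. Using the Prüfer equation \eqref{eq_Pruf1} (in the normalized variable, $\tfrac{d\varphi}{d\tau}=\bar\gamma\sin^2\varphi+\cos^2\varphi=:F(\tau,\varphi)$) together with the mean value theorem in the angular argument, one obtains the \emph{exact} scalar linear ODE
\[
\frac{d\psi}{d\tau}=A(\tau)\,\psi-g(\tau),\qquad A(\tau)=\int_0^1\partial_\phi F(\tau,\tilde\varphi+s\psi)\,ds,\qquad g(\tau)=(\tilde{\bar\gamma}-\bar\gamma)\sin^2\tilde\varphi\geq0,
\]
in which the inhomogeneity $g\geq0$ encodes exactly the ordering \eqref{gaga0}. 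Integrating this linear equation from $\tau$ to any $T>\tau$ gives
\[
\psi(\tau)=e^{-\int_\tau^T A}\,\psi(T)+\int_\tau^{T}e^{-\int_\tau^{s}A(r)\,dr}\,g(s)\,ds .
\]

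\textbf{The main obstacle} is to justify the passage $T\to\infty$ in this identity, and in particular to handle the delicate case $\tilde{\bar\gamma}_\infty=\bar\gamma_\infty$ of coincident limiting angles, where $g$ need not be integrable and comparison ``at infinity'' is inconclusive. Both difficulties are resolved by a single computation. Since $\partial_\phi F(\tau,\phi)=(\bar\gamma(\tau)-1)\sin2\phi$, and the limiting angles lie in $(\pi/2,\pi)$ where $\sin2\phi<0$ while $\bar\gamma_\infty-1<0$, the integrand defining $A$ is positive in the limit; hence $A(\tau)\to A_\infty>0$, with $A_\infty=(\bar\gamma_\infty-1)\sin2\varphi_\infty=2\lambda$ in the coincident case. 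This is precisely the hyperbolicity (linear expansion at $\varphi_\infty$) underlying the uniqueness of the decaying solution. Consequently $e^{-\int_\tau^{s}A}$ decays exponentially in $s$, so the boundary term $e^{-\int_\tau^T A}\psi(T)\to0$ (as $|\psi(T)|\leq\pi/2$ stays bounded) and the improper integral converges even when $g\not\to0$. Passing to the limit yields the representation
\[
\psi(\tau)=\int_\tau^{+\infty}e^{-\int_\tau^{s}A(r)\,dr}\,g(s)\,ds\;\geq\;0 ,
\]
so that $\tilde\varphi(\tau)\leq\varphi(\tau)$ for all $\tau\geq\tau_0$; transferring back through \eqref{subs_tau} gives \eqref{in_cond_decay}.
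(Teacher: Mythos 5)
Your proof is correct, and it takes a genuinely different route from the paper's. The paper argues directly on the solutions: after fixing signs ($u,\tilde u>0$, $w,\tilde w<0$, obtained from the monotonicity of $uw$ in Remark \ref{der_uw} together with decay), it applies the integral Lagrange identity (Green's formula) to $Z,\tilde Z$ over $[y,+\infty)$, uses that the cross-Wronskian $u\tilde w-w\tilde u$ vanishes at infinity, and divides by $w\tilde w>0$ to get $\tilde u/\tilde w\le u/w$, which is \eqref{in_cond_decay}; the whole argument is a few lines. You work instead at the level of the Pr\"ufer angle: an exact linear ODE for the gap $\psi=\varphi-\tilde\varphi$ (integral mean value theorem), solved by variation of constants integrated from infinity, with the boundary term suppressed by asymptotic hyperbolicity. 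Your supporting computations check out: the intermediate angles stay in $[\pi/2,\pi]$, where $\partial_\phi F=(\bar\gamma-1)\sin 2\phi\ge 0$; the limiting angles $\pi-\Arccot\lambda$ and $\pi-\Arccot\tilde\lambda$ lie strictly inside $(\pi/2,\pi)$ --- this step legitimately invokes the Corollary to Proposition \ref{asymp}, available because $\beta,\tilde\beta\in L_1$ by membership in $\mathcal{G}^-(y_0)\subset\mathcal{G}$ --- hence $A(\tau)\to A_\infty>0$ (indeed $A_\infty=2\lambda$ when the limits coincide), $|\psi(T)|\le\pi/2$ stays bounded, and the manifestly nonnegative integral representation of $\psi$ follows. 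As for what each approach buys: the paper's identity-based proof is shorter and needs only that both solutions and their derivatives tend to zero, with no information about limiting angles required; yours is heavier on asymptotics but yields a quantitative formula for the angle gap and makes the mechanism transparent --- hyperbolic attraction of the Pr\"ufer angle at infinity at rate $2\lambda$, the same spectral gap that underlies uniqueness of the decaying direction.
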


%%\begin{rem}
%%  The relation \eqref{in_cond_decay} is analogue  of the   relation \eqref{-fifi1} for   %%$y_0$, located at infinity.
%%\end{rem}
\begin{proof} Without lack of generality we may assume $\mu(y) \equiv 1$; otherwise we  perform substitution \eqref{subs_tau} of the independent variable, which preserves relation \eqref{gaga0} for the coefficients.

By \eqref{gaga0} and \eqref{dup},  the functions $u  w$ and $\tilde u \tilde w$ are increasing  on $[y_0, +\infty)$.
As long as the limits of these functions at $+\infty$  are null, we conclude that $(u w)(y)<0,(\tilde u  \tilde w)(y)<0$ on $[y_0, +\infty)$  and then without lack of generality  we may assume that $u(y),  \tilde u(y)$ are  positive, while $w(y),  \tilde w(y)$ are  negative on  $[y_0, +\infty)$.

Denote $\tilde \gamma (y)-\gamma (y)$ by $\Delta \gamma (y)$  and represent the second one of  equations \eqref{SL_pair} as
\begin{equation}\label{perturb_form}
 \tilde u''+ \gamma (y)\tilde u=-\Delta \gamma (y) \tilde u ;
\end{equation}
$\Delta \gamma (y)>0$ by \eqref{gaga0}.

 Applying the  integral form of the Lagrange identity (or Green's formula, see \cite[\S XI.2]{Har}) to the respective vector  solutions $Z =\left(
                          \begin{array}{c}
                            w \\
                            u \\
                          \end{array}
                        \right), \ \tilde Z =\left(
                          \begin{array}{c}
                            \tilde w \\
                            \tilde u \\
                          \end{array}
                        \right)$ of equations \eqref{SL_pair}, of which the second one is  written  as \eqref{perturb_form},   we conclude:
   \[\forall y \geq y_0: \ \left.\left(u \tilde w-w \tilde u\right)\right|_y^{+\infty}=\int_y^{+\infty}
   -\Delta \gamma (s) \tilde u(s) u(s)ds <0.\]
 Given that $\left(u \tilde w-w \tilde u\right)$   vanishes at $+\infty$,  we obtain:
\begin{equation}\label{Lag_identity}
\forall y \geq y_0: \
-u(y) \tilde w(y)+w(y) \tilde u(y)=\int_y^{+\infty}
   -\Delta \gamma (s) \tilde u(s) u(s)ds <0.
      \end{equation}
%%As far as $-\Delta \gamma (t) \geq 0$ and $\tilde u(t) u(t)>0$, we conclude
%%\[u(y) \tilde w(y)-w(y) \tilde u(y) \leq 0.\]
Dividing the inequality in \eqref{Lag_identity}  by the positive value
$w(y) \tilde w (y)$,  we get
\[\forall y \geq y_0: \ \frac{\tilde u(y)}{\tilde w(y)} \leq \frac{u(y)}{w(y)},  \]
wherefrom \eqref{in_cond_decay}  follows.
\end{proof}

   We establish the continuous dependence of decaying solutions on the coefficient $\gamma(\cdot)$ in $\|\cdot\|_{01}$-norm.

\begin{proposition}{\bf (continuous dependence of decaying solutions on the right-hand side)}\label{decaying_continuity}
Consider equations \eqref{SL_pair}.
Let $\gamma (\cdot)=-\lambda^2+\beta(\cdot) \in \mathcal{G}^-_{y_0}$  for some $y_0 \in [0,+\infty)$.
 Then for any $\tilde \gamma (\cdot)=-\tilde \lambda^2+\tilde \beta (\cdot) $   sufficiently close to $\gamma(\cdot)$ in   $\|\cdot\|_{01}$-norm: % exists $\delta_0 >0$ such  %with
%\begin{equation}\label{delta0}
% \Delta \gamma (\cdot) \in  \mathcal{G}, \ \|\Delta \gamma (\cdot)\|_{01} < \delta_0,
%\end{equation}

 i)  both   equations \eqref{SL_pair} possess the decaying vector solutions $Z(\cdot), \tilde Z(\cdot)$ with  $\Arg Z$, $\Arg \tilde Z \in [\sfrac{\pi}{2}, \pi]$;

 %%\begin{equation}\label{cont_dep}
 %% \begin{eqnarray*}
  % \nonumber to remove numbering (before each equation)
    ii)   for each  $y \in [y_0, +\infty)$
    \[\left|\Arg \tilde Z(y) - \Arg Z(y)\right| \to 0, \ \text{as} \   \|\tilde \gamma (\cdot)- \gamma (\cdot)\|_{01} \to 0.\]
%%  \end{eqnarray*}
%% \end{equation}
 \end{proposition}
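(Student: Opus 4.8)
The plan is to first reduce to the case $\mu \equiv 1$ via the substitution \eqref{subs_tau}, exactly as in the proof of Proposition \ref{ic_decaying_solution}: because $\tau(\cdot)$ is a bi-Lipschitz homeomorphism of $[0,+\infty)$, the substitution carries $\mathcal{G}^-(y_0)$ to $\mathcal{G}^-(\tau(y_0))$, leaves $\Arg Z$ unchanged, and induces an equivalence of the $\|\cdot\|_{01}$-norms, so that both the hypothesis and the conclusion are unaffected. Part i) is then essentially a matter of openness: since $\mathcal{G}^-(y_0)$ is open in $\|\cdot\|_{01}$, every $\tilde\gamma = -\tilde\lambda^2 + \tilde\beta$ in a small enough ball about $\gamma$ again belongs to $\mathcal{G}^-(y_0)$, hence is negative on $[y_0,+\infty)$ and meets the hypotheses of Propositions \ref{asymp} and \ref{pi2pi}. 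The latter supplies, for each of $\gamma$ and $\tilde\gamma$, a decaying solution whose Pr\"ufer angle lies in $[\sfrac{\pi}{2},\pi]$; tracelessness of the system rules out a second independent decaying solution, so the decaying direction, and therefore $\Arg Z$ and $\Arg\tilde Z$, is well defined. I normalize both solutions by $r(y_0)=\tilde r(y_0)=1$.

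For part ii) I would re-run the Lagrange (Green) identity used in Proposition \ref{ic_decaying_solution}, but drop the sign hypothesis on $\Delta\gamma := \tilde\gamma - \gamma$. With $w=u'$, $\tilde w=\tilde u'$ one has $\frac{d}{ds}(u\tilde w - w\tilde u) = (\gamma - \tilde\gamma)u\tilde u$; integrating from $y$ to $+\infty$, where the boundary term vanishes because both solutions decay, and rewriting the cross product in Pr\"ufer coordinates through $u\tilde w - w\tilde u = r\tilde r\sin(\Arg Z - \Arg\tilde Z)$, gives the exact identity
\[
r(y)\tilde r(y)\,\sin\bigl(\Arg Z(y)-\Arg\tilde Z(y)\bigr)=\int_y^{+\infty}\Delta\gamma(s)\,u(s)\tilde u(s)\,ds .
\]
Since both angles lie in $[\sfrac{\pi}{2},\pi]$ their difference lies in $(-\sfrac{\pi}{2},\sfrac{\pi}{2})$, so the elementary bound $|\theta|\le\sfrac{\pi}{2}|\sin\theta|$ turns this into
\[
\bigl|\Arg Z(y)-\Arg\tilde Z(y)\bigr|\le\frac{\pi}{2}\,\frac{1}{r(y)\tilde r(y)}\int_y^{+\infty}|\Delta\gamma(s)|\,u(s)\tilde u(s)\,ds .
\]
Splitting $|\Delta\gamma|\le|\Delta\gamma_\infty|+|\Delta\beta|$ and estimating the two contributions by $|\Delta\gamma_\infty|\int_y^{+\infty}u\tilde u\,ds$ and $\bigl(\sup_{[y,+\infty)}u\tilde u\bigr)\|\Delta\beta\|_{L_1}$, the conclusion of ii) follows provided the geometric factors stay under control, because $|\Delta\gamma_\infty|=|\lambda^2-\tilde\lambda^2|$ and $\|\Delta\beta\|_{L_1}$ are both dominated by $\|\tilde\gamma-\gamma\|_{01}\to0$.

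The crux, and the step I expect to be the main obstacle, is to make the factors $r(y)\tilde r(y)$, $\int_y^{+\infty}u\tilde u\,ds$ and $\sup_{[y,+\infty)}u\tilde u$ bounded and bounded away from $0$ uniformly over a whole $\|\cdot\|_{01}$-ball, rather than just for the fixed limiting equation. The upper bounds $u,\tilde u\le r\le1$ are immediate from $\frac{d}{ds}r^2=2(1-\gamma)\,uw<0$ on $[y_0,+\infty)$ (a computation of the type \eqref{dup}, using $uw<0$ for the decaying solution), and a uniform lower bound $r(y)\tilde r(y)\ge c_y>0$ for each fixed $y$ follows from $\bigl|\frac{d}{ds}\ln r^2\bigr|=(1+|\gamma|)\,|\sin2\varphi|\le 1+\|\gamma\|_{C^0}$, which is uniformly bounded on the ball. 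The genuinely delicate point is the uniform convergence of $\int_y^{+\infty}u\tilde u\,ds$: this requires a uniform exponential bound $u(s)\le C\,e^{-\lambda_0 s}$ with $\lambda_0>0$ and $C$ independent of $\tilde\gamma$ in the ball. I would extract it from the quantitative estimate \eqref{u-} of Proposition \ref{asymp}, using that $\int_{\tau_0}^\tau|\beta|\le\|\beta\|_{L_1}$ is controlled by the norm and that $\gamma_\infty=-\lambda^2$ stays below $-\lambda^2/2<0$ on the ball; the point to verify carefully is that the constants appearing in \eqref{u-} can be chosen depending only on $\lambda$, $\|\beta\|_{C^0}$ and $\|\beta\|_{L_1}$, so that they remain uniform as $\tilde\gamma\to\gamma$.
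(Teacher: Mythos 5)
Your proof is correct, and its skeleton coincides with the paper's: reduce to $\mu\equiv 1$ via \eqref{subs_tau}; obtain part i) from openness of $\mathcal{G}^-(y_0)$ together with Corollary \ref{bliz} and Proposition \ref{pi2pi}; obtain part ii) by integrating the Lagrange identity over $[y,+\infty)$ (vanishing boundary term) and controlling the resulting integral through the exponential estimates of Proposition \ref{asymp}, with constants chosen uniformly over a $\|\cdot\|_{01}$-ball --- a uniformity that the paper, exactly like you, settles by appealing to Bellman's proof rather than re-deriving it, so your flagged ``point to verify carefully'' is not a gap but precisely the citation the paper makes. The genuine difference is in how the integrated identity is converted into an angle estimate. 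You keep the normalization $r(y_0)=\tilde r(y_0)=1$, read the Wronskian as $u\tilde w-w\tilde u=r\tilde r\,\sin\bigl(\Arg Z-\Arg\tilde Z\bigr)$, and then need two extra geometric inputs: Jordan's inequality $|\theta|\le\tfrac{\pi}{2}|\sin\theta|$ and a lower bound on $r(y)\tilde r(y)$ uniform over the ball, which you supply by a Gronwall-type bound on $\ln r^2$. The paper instead divides the identity by $-u(y)\tilde u(y)$, i.e.\ renormalizes both solutions to equal $1$ at the evaluation point $y$ itself; the renormalized solutions $\nu,\tilde\nu$ are then positive, decreasing, hence $\le 1$ on $[y,+\infty)$, the identity directly yields the difference of cotangents $\tfrac{\tilde w}{\tilde u}-\tfrac{w}{u}$, and the angle estimate follows at once from $\Arccot$ being $1$-Lipschitz. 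That normalization trick makes your amplitude factors $1/(r\tilde r)$ and $\sup u\tilde u$ disappear, so what you identified as the crux splits into an avoidable part (the amplitude bounds) and an unavoidable one (the uniform exponential decay feeding the $\int e^{-\lambda s}|\Delta\gamma|\,ds$ term); both versions stand or fall on the latter, and both handle it the same way.
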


 \begin{proof}Again we may proceed assuming $\mu(y) \equiv 1$.
%%   For $\gamma (\tau)=-\lambda^2+\beta(\tau)$, being as in equation \eqref{Har_SL}, %%any $C^0$-close function $\tilde \gamma (\tau)=-\tilde \lambda^2+\tilde \beta %%(\tau)$.

i)  Any $\tilde \gamma(\cdot)$ sufficiently close to $\gamma(\cdot)$ in $\|\cdot\|_{01}$-norm  belongs to  $\mathcal{G}^-_{y_0}$,  which  is open with respect to the norm.   The existence of the decaying solutions $Z(y), \tilde Z(y)$ follows from Corollary \ref{bliz}. Since both $\gamma$ and $\tilde \gamma$ are negative   on $[y_0,+\infty)$,  we conclude by Proposition \ref{pi2pi} that
$\Arg Z(y)$ and $ \Arg \tilde Z(y)$ lie in $[\sfrac{\pi}{2}, \pi]$ for $y \in [y_0,+\infty)$.

This implies that for $s \in [y_0,+\infty), \ w(s), \tilde w(s) $ are negative,  while   $u(s), \tilde u(\tau)$ are positive and  by \eqref{sys} decrease.

ii)   Recall that   $\Delta \gamma (\cdot)=\tilde \gamma (\cdot)-\gamma(\cdot)$.  Invoking the equality in \eqref{Lag_identity} and dividing it
by  $-u(y) \tilde u (y)$,   we get
\begin{equation}\label{Delta_Z}
 \frac{\tilde w(y)}{\tilde u(y)} -\frac{w(y)}{u(y)}  = \int_y^{+\infty}
   \Delta \gamma (s) \frac{\tilde u(s)}{\tilde u(y)}\frac{u(s)}{u(y)}ds=\int_y^{+\infty}
   \Delta \gamma (s)\nu(s)\tilde \nu (s)d\tau ,
\end{equation}
where  $\nu(s)=\frac{\tilde u(s)}{\tilde u(y)}, \ \tilde \nu (s)=\frac{u(s)}{u(y)}$ are the  solutions of the  first and second equation \eqref{SL_pair},  which are
normalized by
 the condition: $\nu(y)=\tilde \nu(y)=1$.

 By the aforesaid $\nu(s), \ \tilde \nu (s)$ decrease; hence
 \begin{equation}\label{nu1}
   \nu(s) \leq 1, \tilde \nu(s) \leq 1, \ \text{for}\   s \geq y .
 \end{equation}
  According to Proposition \ref{asymp},  there  exist $c_1,d_1>0, s_0 >y$ such that
  \begin{equation}\label{est_nu}
    \nu(s) \leq c_1 exp\left(-\lambda s + d_1 \int_{s_0}^{s}|\beta (\sigma)|d\sigma\right), \ \forall s > s_0.
  \end{equation}
   From the proof of the Proposition (see \cite[\S 6.12, \S 2.6]{Bel}) it follows  that one can choose
 in  \eqref{est_nu} any $c_1>1$, a sufficiently large $d_1$ and then choose $s_0$ such that  $d_1 \sup_{s \geq s_0}|\beta(s)| < \lambda$.
The same holds for the second one of  equations \eqref{SL_pair}.

For each  $\tilde \gamma$ from a small neighborhood of  $\gamma$ in $\|\cdot \|_{01} $-norm,
$\tilde \lambda$ and $\lambda$  as well as   $\sup_{\tau \geq \tau_0}|\beta(\tau)|$ and $\sup_{\tau \geq \tau_0}|\tilde \beta(\tau)|$ are close.
Thus  one can choose common $c_1,d_1,\tau_0$ for all the equations with  the coefficient $\tilde \gamma$  from the neighborhood.
Besides  there is a common upper bound $B$ for the corresponding norms
$\|\tilde \beta(\cdot)\|_{L_1}$.
Then by \eqref{Delta_Z},\eqref{nu1} and \eqref{est_nu}
\[ \left|\int_y^{+\infty}
   \Delta \gamma (s)\nu(s)\tilde \nu (s)ds \right|  \leq \int_{y}^{s_0}\left|\Delta \gamma (s)\right|d s +
c^2_1 e^{ 2d_1B}\int_{s_0}^\infty  e^{-\lambda s}\left|\Delta \gamma (s)\right|d s \]
with the right-hand side tending  to $0$ as $\left\|\Delta \gamma (s)\right\|_{01} \to 0$.

%%  we get from \eqref{Delta_Z}
%%\[\left|\frac{u(y)}{w(y)}-\frac{\tilde u(y)}{\tilde w(y)}\right| \leq \frac{\tilde u(y) u(y)}{w(y) \tilde w (y)}\int_y^{+\infty}
%%   \left|\Delta \gamma (\tau)\right|d\tau    , \]
%%or equivalently
%%\[\left|\frac{w(y)}{u(y)}-\frac{\tilde w(y)}{\tilde u(y)}\right| \leq
%%   \left\|\Delta \gamma (\tau)\right\|_{L_1[y_0,+\infty)} .\]
Note that $\Arg Z=\Arccot \frac{w(y)}{u(y)}, \ \Arg \tilde Z=\Arccot \frac{\tilde w(y)}{\tilde u(y)}$ and since the function $z \mapsto \Arccot z$ is Lipschitzian with constant $1$:
 \begin{equation*}
  % \nonumber to remove numbering (before each equation)
   \left|\Arg Z(y) - \Arg \tilde  Z(y)\right|=\left|\Arccot \frac{w(y)}{u(y)}-\Arccot  \frac{\tilde w(y)}{\tilde u(y)}\right|  \leq \left|\frac{w(y)}{u(y)}-\frac{\tilde w(y)}{\tilde u(y)}\right|
 \end{equation*}
and the left-hand side tends to $0$ as $\left\|\Delta \gamma (\tau)\right\|_{01} \to 0$.

 %%They
 %%decay exponentially and  from \eqref{Delta_Z} it follows that
 %%\[   \left|\Arg Z(y) - \Arg \tilde  Z(y)\right| \leq c \|\Delta \gamma %%(\cdot)\|_{C^0}.\]
 \end{proof}

\section{Existence of surface waves and parametric Sturm-Liouville problem}
\label{section_parametric}

We come back to equation \eqref{ueq} and  simplify the notations  putting
$\Omega=\omega^2, \  K=k^2, \ A=(K,\Omega)$,
\begin{equation}\label{def_gamma}
  \gamma_A(y)= \Omega \rho(y)-K \mu(y),
\end{equation}
thus arriving at  the   equation
\begin{equation}\label{SL_par}
  (\mu(y)u')'(y)+\underbrace{\left(\Omega \rho(y)-K \mu(y)\right)}_{ \gamma_A(y)}u(y)=0
\end{equation}
\emph{with the parameter} $A$.

Performing the substitution of the independent variable the way it is done in  \eqref{subs_tau}, we get
the equation:
\begin{equation}\label{1SL_par}
\frac{d^2 \bar u}{d\tau^2}+\underbrace{\bar \mu (\tau) \left(\Omega \bar \rho(\tau)-K \bar \mu(\tau)\right)}_{\bar \gamma_A(\tau)} \bar u(\tau)=0,
\end{equation}
where
\begin{equation}\label{subs_all}
  \bar \rho(\tau)=\rho(y(\tau)), \ \bar \mu(\tau ) =\mu(y(\tau)), \ \bar u(\tau)=u(y(\tau)).
\end{equation}

In equations \eqref{SL_par} and \eqref{1SL_par} the dependence of the coefficients on the parameters $\Omega , K$ is linear;  the functions $\rho(y) , \ \mu(y), \ \bar \rho(\tau), \ \bar \mu(\tau )$ are positive. Note that $\bar \mu(0)=\mu(0), \bar \rho(0)=\rho(0)$ and
%%\begin{equation}\label{val_inf}
\[ \bar \mu(+\infty)=\mu(+\infty), \ \bar \rho(+\infty)=\rho(+\infty), \ \bar \gamma_A(+\infty)=\Omega \mu_\infty \rho_\infty -K\mu_\infty^2.\]
%%\end{equation}

We know from the previous Section that if equation \eqref{1SL_par}  meets the assumptions of  Proposition~\ref{asymp}, then  it has a   solution,
which satisfies  the boundary condition at infinity \eqref{dinf}.
We are interested, though,  in the solutions,  which satisfy at the same time   the boundary condition \eqref{bc}, and it is not possible for  generic combinations of  $\bar \rho(y), \bar \mu(y), \Omega , K$, which  enter  \eqref{1SL_par} via the coefficient  $\bar \gamma_A(\cdot)$.
In other words we  get  parametric Sturm-Liouville  problem on a half-line for  equation \eqref{1SL_par} (or \eqref{SL_par}) with the boundary conditions  \eqref{bc}-\eqref{dinf}.

Let us  introduce the vector-function  $a(y)=(\rho(y),\mu(y))$, which characterizes our medium,  and  formulate the assumptions for the medium in terms of $a(y)$.

\begin{as}[Lipschitz continuity]
  \label{lip_cont}
  The function  $a(y)=\left(\rho (y),\mu(y) \right)$ is  Lipschitz continuous  on $[0,+\infty)$.
  %%$$\exists \ell : \ \|a(y_2)-a(y_1)\| \leq \ell|y_2-y_1|.$$
  There exists a    finite   limit
  \[\lim_{y \to + \infty}a (y)=a_\infty,  a_\infty=\left(\rho_\infty , \mu_\infty\right), \ \rho_\infty>0 , \ \mu_\infty>0.\]
  %%The function $a(y)$ is of  bounded variation on $[0,+ \infty)$.
\end{as}

\begin{as}[integral boundedness]
  \label{bound_L1}
  The function  \[\hat a(y)=(\hat \rho(y), \hat \mu(y))=a(y)-a(+\infty)=(\rho(y)-\rho_\infty , \mu(y)-\mu_\infty)\] is integrable on $[0,+\infty)$: $\int_0^\infty \left|\hat a(y)\right|dy < \infty $.
\end{as}

%%Defining $\rho(y)=\rho_\infty+ \hat \rho (y), \ \mu(y)=\mu_\infty + \hat \mu (y)$ we %%introduce

%%\begin{asmp}[Square-Root Integral Boundedness]
%%  \label{bound_squareroot}
%%  The functions  \linebreak
%%  $\hat \rho (y)|^{1/2} , |\hat \mu (y)|^{1/2}$
%%  are integrable on $[0,+\infty)$:
%%  \begin{equation}\label{squareroot_bound}
%%  \int_0^{+\infty} \left|\hat \rho (y)\right|^{1/2}+\left|\hat \mu (y)\right|^{1/2}dy %%< \infty .
%%  \end{equation}
%%  \end{asmp}

%%Note that,    the values of $\rho(y), \mu(y)$ are bounded on $[0,+\infty]$, and hence
%%{\it Square-Root Integral Boundedness implies Integral Boundedness}.

We now introduce  monotonicity assumptions  formulated in terms of  polar coordinates representation for $a(y)$.
Let
\begin{eqnarray*}
% \nonumber to remove numbering (before each equation)
  |a(y)|=\left((\rho (y))^2+(\mu(y))^2 \right)^{1/2}, \ \Arg a(y)=\Arctan \frac{\mu(y)}{\rho(y)},  \\
a(y)=\left(\rho (y),\mu(y) \right)=|a(y)|\left(\cos \Arg a(y), \sin \Arg a(y)  \right).
\end{eqnarray*}
%%We denote (abusing notation) $\theta_{a(y)}$ by $\Arg a(y)$.
The values $a(y)$ have both positive coordinates; hence the values of  $\Arg a(y)$  lie  in $[0, \sfrac{\pi}{2}]$.
As long as $a_\infty \neq 0$,  $\Arg a_\infty$ is properly defined.

%%For the sake of brevity we denote for bi-dimensional  positive vectors $a,a': \ a %%\preceq a'$, if $\Arg a \leq \Arg a'$.

%%It is known that qualitative behaviour of the   solutions of the equation \eqref{SL_par} is highly influenced by the changes of the sign of the function $\gamma %%(y)$ (or $\bar \gamma (y)$).
%%To make analysis possible we impose the following Monotonicity Assumption onto
%%the {\it compact} (image of the) parameterized curve $\left(\rho(y), \mu(y)\right), \ y \in [0,+\infty]$ in the plane $O\rho \mu$.

%%We call the curve {\it increasing (decreasing) in angle} if the vector $\left(\rho(y), \mu(y)\right)$ rotates in positive (negative) sense in the plane  $O\rho %%\mu$, as $y$ changes from $0$ to $+\infty$.

\begin{as}[monotonicity at infinity]
\label{monot}
      There exists an interval  $\bar I=(\bar y , +\infty)$  such that   either: i) $\Arg a(y) < \Arg a_\infty$ on $\bar I$ - positive monotonicity at infinity, or ii) $\Arg a(y) > \Arg a_\infty$ on $\bar I$ - negative monotonicity at infinity.
\end{as}

    Examples of  the curves $a(y)=\left(\rho(y), \mu(y)\right)$ are drawn in  Figure 1   together  with the vector $A=(K,\Omega)$.
The curves (1) and (2) are negatively monotonous at infinity, while the curve (3) is positively monotonous at infinity.

\begin{figure}
  \centering
\includegraphics[width=280pt]{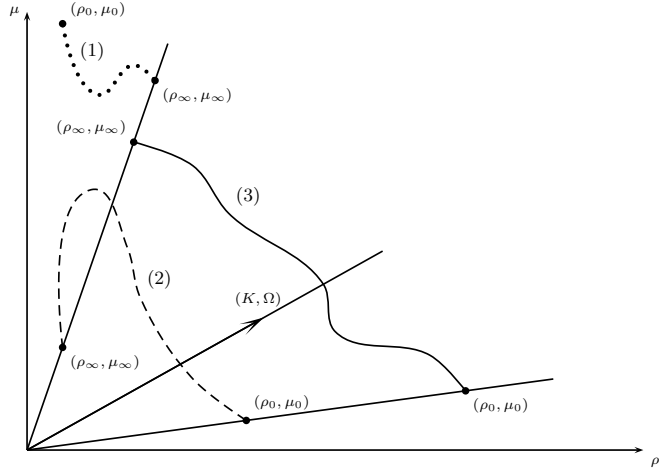}
\caption{Curves  parameterized by $y$, which  exhibit different types of monotonicity at infinity}
\label{fig:1}
\end{figure}

 Assume  the vector of parameters $A=(K,\Omega)$  to belong to (the positive quadrant of)  the oriented plane, in  which the  curve $y \mapsto a(y), \ y \in [0,+\infty]$ is contained.
We define  $\Arg A = \Arctan \frac{\Omega}{K}$.
The following remarks are important.

\begin{remark}\label{rem_property_gammA} Fix an admissible $A$.
If
$\Arg a(y) < \Arg  A$ (respectively $\Arg a(y) > \Arg  A$)  for some  $y$,    then   $\gamma_A(y)$  defined by \eqref{def_gamma}  is positive (respectively negative).  $\qed$
 \end{remark}

\begin{remark}
  Under   Assumptions \ref{lip_cont}  and \ref{bound_L1},   for any admissible $A$ and $\gamma_A(\cdot)$ defined by \eqref{def_gamma}  there holds:
  \begin{enumerate}
    \item  $\gamma_A  (y) - \gamma_A  (+\infty) \stackrel{y \to +\infty}{\longrightarrow} 0$ ;
\item  $\int_0^\infty |\gamma_A(y)-\gamma_A(+\infty)| dy < \infty $;
\item  if $\gamma_A (y)$ admits positive values, then so does $\gamma_{A'}(y)$ with any  $A'$  such that  $\Arg A'$  is sufficiently close to $\Arg A$. $\qed$
     %%\item Under Square-Root Integral Boundedness Assumption \ref{bound_squareroot} %%\linebreak
 %% $\int_0^\infty |\hat \gamma (y)|^{1/2} dy < \infty $.
     \end{enumerate}
\end{remark}

We wish to check what occurs with   Assumptions \ref{lip_cont}-\ref{bound_L1}-\ref{monot}   after  substitution  \eqref{subs_tau}.

\begin{proposition}\label{assump_subs} Let Assumption \ref{as_mu} hold and let equation \eqref{SL_par}  meet   Assumptions \ref{lip_cont}, \ref{bound_L1}, \ref{monot} for any admissible $A$ .   Then equation \eqref{1SL_par}
meets the same  Assumptions.
\end{proposition}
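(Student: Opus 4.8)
The plan is to recognize the ``medium vector'' attached to the transformed equation \eqref{1SL_par} and then to verify Assumptions \ref{lip_cont}, \ref{bound_L1}, \ref{monot} for it directly. Since the leading coefficient of \eqref{1SL_par} equals $1$, its coefficient $\bar\gamma_A(\tau)=\bar\mu(\tau)\big(\Omega\bar\rho(\tau)-K\bar\mu(\tau)\big)=\Omega\,\bar\mu\bar\rho(\tau)-K\,\bar\mu^2(\tau)$ is already of the form $\Omega\tilde\rho(\tau)-K\tilde\mu(\tau)$ with $\tilde\rho=\bar\mu\bar\rho$ and $\tilde\mu=\bar\mu^2$. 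Hence the transformed medium is $\tilde a(\tau)=(\tilde\rho(\tau),\tilde\mu(\tau))=\bar\mu(\tau)\,(\bar\rho(\tau),\bar\mu(\tau))=\bar\mu(\tau)\,a(y(\tau))$; that is, it is the reparametrized curve $a\circ y$ rescaled pointwise by the positive factor $\bar\mu(\tau)$. The key observation I would exploit is that multiplication by a positive scalar does not change the argument, so $\Arg\tilde a(\tau)=\Arg a(y(\tau))$, and likewise $\tilde a_\infty=\mu_\infty a_\infty$, whence $\Arg\tilde a_\infty=\Arg a_\infty$. Note also that neither $\tilde a$ nor the substitution $\tau(\cdot)$ depends on $A$, so it suffices to check the (intrinsically $A$-independent) Assumptions once.

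With this structure, the verifications are essentially bookkeeping. For Assumption \ref{lip_cont}: by Assumption \ref{as_mu} the inverse substitution $y(\tau)$ is a Lipschitz homeomorphism of $[0,+\infty)$, so $\bar\mu=\mu\circ y$ and $\bar\rho=\rho\circ y$ are compositions of Lipschitz maps, hence Lipschitz; being bounded (Assumptions \ref{as_mu} and \ref{lip_cont}), their products $\tilde\rho=\bar\mu\bar\rho$ and $\tilde\mu=\bar\mu^2$ are Lipschitz, with finite positive limits $\tilde\rho(+\infty)=\mu_\infty\rho_\infty$ and $\tilde\mu(+\infty)=\mu_\infty^2$. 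For Assumption \ref{bound_L1} I would change variables back via $d\tau=(\mu(y))^{-1}dy$ and use the elementary bounds $|\bar\mu^2-\mu_\infty^2|\le(\bar\mu+\mu_\infty)|\bar\mu-\mu_\infty|$ and $|\bar\mu\bar\rho-\mu_\infty\rho_\infty|\le\bar\mu|\bar\rho-\rho_\infty|+\rho_\infty|\bar\mu-\mu_\infty|$ to obtain
\[
\int_0^\infty \big|\tilde a(\tau)-\tilde a_\infty\big|\,d\tau \;\le\; C\int_0^\infty \big(|\mu(y)-\mu_\infty|+|\rho(y)-\rho_\infty|\big)\big(\mu(y)\big)^{-1}\,dy,
\]
which is finite because $(\mu(y))^{-1}\le 1/\underline{\mu}$ by Assumption \ref{as_mu} and $\hat a=a-a_\infty\in L_1$ by Assumption \ref{bound_L1}.

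Finally, Assumption \ref{monot} follows immediately from the argument-preservation identity: given the original interval $\bar I=(\bar y,+\infty)$ on which $\Arg a-\Arg a_\infty$ has a fixed sign, the homeomorphism $y(\cdot)$ maps $(\tau(\bar y),+\infty)$ onto $(\bar y,+\infty)$, so $\Arg\tilde a(\tau)=\Arg a(y(\tau))$ keeps the same relation to $\Arg\tilde a_\infty=\Arg a_\infty$ on $(\tau(\bar y),+\infty)$; positive (resp.\ negative) monotonicity at infinity is thus inherited. The only step needing genuine care is the integrability estimate, where one must remember that the change of variables inserts the Jacobian $(\mu(y))^{-1}$ and that the transformed coefficients are the \emph{products} $\bar\mu\bar\rho$ and $\bar\mu^2$ rather than $\bar\rho$ and $\bar\mu$ themselves. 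It is precisely the uniform lower bound $\mu\ge\underline{\mu}$ that keeps this Jacobian bounded and converts the hypothesized $L_1$ control of $a-a_\infty$ into $L_1$ control of $\tilde a-\tilde a_\infty$; every other assertion is a routine consequence of $\tilde a=\bar\mu\cdot(a\circ y)$.
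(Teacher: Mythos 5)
Your proposal is correct and follows essentially the same route as the paper: identify the transformed medium as $\bar\mu(\tau)\,(a\circ y)(\tau)$, use that $\tau(y)$ is a Lipschitz homeomorphism for Assumption \ref{lip_cont}, invariance of $\Arg$ under positive pointwise scaling for Assumption \ref{monot}, and the change of variables $d\tau=(\mu(y))^{-1}dy$ with the bounded Jacobian for Assumption \ref{bound_L1}. If anything, you are slightly more scrupulous than the paper, which verifies the Lipschitz property only for $\bar\rho,\bar\mu$ and the $L_1$ condition for $\bar\gamma_A$, whereas you check both directly for the actual transformed medium components $\bar\mu\bar\rho$ and $\bar\mu^2$.
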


\begin{proof}
 By Assumption \ref{as_mu},  $\tau(y)$  defined by \eqref{subs_tau}   is  Lip\-schitzian homeomorphism
 of $[0,+\infty)$ onto itself. Hence  the functions $\bar \mu , \bar \rho$  defined by \eqref{subs_all}   are bounded, Lipschitzian,  with finite limits at infinity, i.e.  Assumption \ref{lip_cont} is valid for them.

 Under substitution  \eqref{subs_tau},   the vector-function  $a(y)=(\rho(y),\mu(y)$ is transformed into
  $\bar a(\tau)=\bar \mu(\tau)\left(\bar \rho(\tau), \bar \mu(\tau)\right)$. Hence $\Arg a(y)=\Arg \bar a (\tau(y))$ and all the monotonicity properties  listed in Assumption \ref{monot}  are maintained.

Regarding  Assumption \ref{bound_L1} we perform  substitution \eqref{subs_tau}  and obtain:
%%for the integral $\int_{0}^{+\infty}\left|\bar \gamma_A(\tau)-\bar \gamma_A(+\infty)\right|d\tau $:
 \begin{eqnarray*}
 % \nonumber to remove numbering (before each equation)
   \int_{0}^{+\infty}\left|\bar \gamma_A(\tau)-\bar \gamma_A(+\infty)\right|d\tau =\int_{0}^{+\infty}\frac{\left|\mu(y) \gamma_A(y)-
   \mu(+\infty)\gamma_A(+\infty)\right|}{\mu(y)}dy= \\
   =\int_{0}^{+\infty}\left|\left(\gamma_A(y) - \gamma_A(+\infty)\right) + \gamma_A(+\infty) \left(\mu(y)-\mu(+\infty)\right)(\mu(y))^{-1}\right|dy < \infty,
 \end{eqnarray*}
 since  $\left(\mu(y)\right)^{-1}$ is bounded on $[0,+\infty)$.
\end{proof}

For the {\it limit case}, in which $A_\infty=(K_\infty , \Omega_\infty)=\beta a_{\infty}, \ \beta >0$,   or in other words $\Arg A_\infty=\Arg a_\infty$,
we get
\[\gamma_{A_\infty}(y)=\Omega_\infty \rho(y) - K_\infty \mu(y)=
\beta(\mu_\infty(\rho_\infty+\hat \rho(y))-\rho_\infty (\mu_\infty+\hat \mu(y))=\beta \hat  \gamma_\infty (y),   \]
%%and as far as the constant term $\Omega \rho_\infty - K \mu_\infty$ vanishes,  one %gets
where
\begin{equation}\label{ginfin}
   \hat \gamma_\infty (y)=\mu_\infty \hat \rho(y)- \rho_\infty \hat \mu (y).
\end{equation}

\begin{remark}
\begin{enumerate}
\item Under Assumption \ref{lip_cont},  for each $A=(K,\Omega)$ with $\Arg A  < \Arg a_\infty$ there exists an interval, $[y_-, +\infty)$, on which $\gamma_A(y)<0$.

\item Under  Assumption \ref{monot}i) (respectively \ref{monot}ii)),
    there is an interval  $[\bar y, +\infty)$, on which   $\gamma_\infty (y)$  is positive  (respectively  negative).
\end{enumerate}
\end{remark}

\section{Results}\label{sec_res}

     Key information for our treatment is provided by the   {\it limit-case equation},  which corresponds to the vectors of parameters $A_\infty=(K_\infty,\Omega_\infty)=\beta a_\infty, \ \beta >0$.
      For such choice of parameters
      %%the value  $\gamma_\infty$ in  equation \eqref{ginfin} becomes null, and
       equation \eqref{SL_par} takes the form
\begin{equation}\label{SL_limit}
  (\mu(y)u')'+\beta \hat \gamma_\infty (y)u=0
\end{equation}
with $\hat \gamma_\infty(y)$  as in \eqref{ginfin}.
%%Note that Monotonicity Assumption~\ref{monot} implies sign-definiteness of  the %%function $\beta_\infty (y)$ in some neighbourhood of $+\infty$.

%%Integral Boundedness assumption on $\rho (y) , \mu (y)$ implies
%%\[ \int_0^{+\infty}\left|\hat \gamma _\infty(y)\right|dy < \infty ,\]
%%although as we will see more significant is validity or invalidity
%%of Square Root  Integral Boundedness assumption.

%%Note that we employ the result of Proposition~\ref{asymp} to seek the solution, which %%tends to $0$ and therefore work with the vectors $A=(K,\Omega) \preceq %%a_\infty=(\rho_\infty, \mu_\infty)$.

We formulate here  main  results of the paper;  the proofs are provided in the next Section.
Our first result establishes non-existence of solutions under a kind of global negative monotonicity of $a(y)$ at infinity.

\begin{theorem}
\label{neg_monot}
Let  assumptions \ref{lip_cont}-\ref{bound_L1} hold
%%and either \ref{monot}i) or \ref{monot}ii)  hold
and
\begin{equation}\label{a>inf}
\forall y \in [0,+\infty): \ \Arg a(y) \geq \Arg a_\infty.
\end{equation}
  Then there are
no admissible values of parameters $K, \Omega$,  for which solutions
of \eqref{SL_par}-\eqref{bc}-\eqref{dinf} exist.
\end{theorem}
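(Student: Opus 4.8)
The plan is to argue by contradiction: suppose that for some admissible $A=(K,\Omega)$ there is a nontrivial solution $u$ of \eqref{SL_par} satisfying both \eqref{bc} and \eqref{dinf}, and derive a contradiction by splitting according to the sign of $\gamma_A(+\infty)=\Omega\rho_\infty-K\mu_\infty$, equivalently according to the position of $\Arg A$ relative to $\Arg a_\infty$. First I would dispose of the case $\Arg A>\Arg a_\infty$, i.e. $\gamma_A(+\infty)>0$. Since $\gamma_A(y)\to\gamma_A(+\infty)>0$, there is an interval $[\bar y,+\infty)$ on which $\gamma_A>0$, so the transformed equation \eqref{1SL_par} is of the elliptic type covered by Proposition \ref{ellip}. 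That proposition establishes a bijection between $(a,b)\in\mathbb R^2$ and the solution space, with every solution behaving like $(a+o(1))\cos\lambda\tau+(b+o(1))\sin\lambda\tau$; a nontrivial solution has $(a,b)\neq(0,0)$ and therefore oscillates without decaying, contradicting \eqref{dinf}. Hence no solution can exist when $\Arg A>\Arg a_\infty$.

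The substantive case is $\Arg A\le\Arg a_\infty$. Combining this with the hypothesis \eqref{a>inf}, $\Arg a(y)\ge\Arg a_\infty$, gives $\Arg a(y)\ge\Arg A$ for every $y\in[0,+\infty)$, whence by Remark \ref{rem_property_gammA} one obtains the pointwise bound $\gamma_A(y)\le 0$ on the whole half-line. Now I would exploit the Neumann condition together with Remark \ref{der_uw}. Writing $w=\mu u'$, condition \eqref{bc} means $w(0)=0$, so $(uw)(0)=0$; for a nontrivial solution $u(0)\neq 0$. By \eqref{dup}, $\tfrac{d}{dy}(uw)=-\gamma_A u^2+w^2/\mu\ge w^2/\mu\ge 0$ since $\gamma_A\le 0$, so $uw$ is nondecreasing and therefore $uw(y)\ge 0$ for all $y\ge 0$. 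The key step is to show this forces $u$ away from decay. Using $u'=w/\mu$ from \eqref{sys}, I have $(u^2)'=2uu'=2uw/\mu$. If $uw$ became strictly positive at some $y_1$, then by monotonicity $uw(y)\ge uw(y_1)>0$ for $y\ge y_1$, and since $\mu$ is bounded above (Assumption \ref{as_mu}) this yields $(u^2)'\ge 2\,uw(y_1)/\sup\mu>0$, so $u^2$ grows without bound — incompatible with $\lim_{y\to+\infty}u=0$. Consequently $uw$ must remain $\equiv 0$; but then $\tfrac{d}{dy}(uw)\equiv 0$ with both nonnegative summands vanishing gives $w\equiv 0$, hence $u'\equiv 0$ and $u\equiv u(0)$ constant, which can satisfy \eqref{dinf} only if $u\equiv 0$. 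This contradicts nontriviality and completes the case.

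The one subtlety — and the step I expect to be the main obstacle — is the borderline limit case $\Arg A=\Arg a_\infty$, where $\gamma_A(+\infty)=0$ and neither Proposition \ref{asymp} nor Proposition \ref{ellip} applies directly, so the asymptotic classification at infinity is unavailable. The virtue of the argument above is that it sidesteps this entirely: the monotonicity of $uw$ furnished by Remark \ref{der_uw} together with the identity $(u^2)'=2uw/\mu$ is purely elementary and requires only $\gamma_A\le 0$, so it covers the non-elliptic and the borderline cases uniformly without any fine asymptotic estimate. (As a sanity check one may also note that, by the discussion around Proposition \ref{pi2pi}, genuine decaying solutions necessarily lie in the closed second or fourth quadrant where $uw\le 0$, which is exactly the region the Neumann trajectory cannot reach once $uw$ has become positive.) Thus in both cases no admissible $(K,\Omega)$ admits a solution of \eqref{SL_par}-\eqref{bc}-\eqref{dinf}, which is the assertion of the theorem.
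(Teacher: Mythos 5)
Your proof is correct, and its skeleton is the same as the paper's: split according to the position of $\Arg A$ relative to $\Arg a_\infty$, dispose of the elliptic case with Proposition \ref{ellip}, and dispose of the remaining case by playing the monotonicity of $uw$ from Remark \ref{der_uw} against the boundary conditions \eqref{bc}--\eqref{dinf}. The difference is where the borderline case $\Arg A = \Arg a_\infty$ goes, and here your version is actually tighter than the paper's. The paper splits as $\Arg a_\infty \leq \Arg A$ versus $\Arg a_\infty > \Arg A$ and sends the equality case to the elliptic side, where its argument does not literally apply: there $\gamma_A(+\infty)=0$, so $\gamma_A$ does not lie in $\mathcal{G}^+(y_0)$ and Proposition \ref{ellip} is unavailable (indeed, under \eqref{a>inf} one has $\gamma_A \leq 0$ everywhere in that case, not $\geq 0$ near infinity as the paper asserts). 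You instead fold the equality case into the monotonicity side, where $\gamma_A \leq 0$ only non-strictly, and you compensate with two elementary steps: the linear growth of $u^2$ (via $(u^2)'=2uw/\mu$ and boundedness of $\mu$) whenever $uw$ ever becomes positive, and the degenerate alternative $uw\equiv 0 \Rightarrow w\equiv 0 \Rightarrow u$ constant $\Rightarrow u\equiv 0$. A second advantage: the paper's contradiction (strictly increasing from $u(0)w(0)=0$ yet $uw\to 0$) implicitly uses $\lim_{y\to+\infty}u(y)w(y)=0$, i.e. $w\to 0$, which does not follow from \eqref{dinf} alone and would need the asymptotics of Proposition \ref{asymp} (available there since $\gamma_A(+\infty)<0$, but left unstated); your argument uses only $u\to 0$. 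So: same route, but your version closes two small gaps that the paper's own proof leaves open.
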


\begin{remark}\label{rem_A<a-inf}
The assumptions of the theorem are met by  curve (1) in Fig. 1.
The proof of the result is  based on the following fact:
for any $A=(K,\Omega)$ such that   $\Arg A \leq \min_{y \in [0,+\infty)}\Arg a(y)$, or the same,
\[\Omega \leq  K \min_{y \in [0,+\infty)}\frac{\mu(y)}{\rho(y)}=\frac{\mu(\check{y})}{\rho(\check{y})}=\frac{\check{\mu}}{\check{\rho}},\]  solutions of \eqref{SL_par}-\eqref{bc}-\eqref{dinf} do not exist. $\qed$
\end{remark}

 If \eqref{a>inf} does not hold, then  one can guarantee existence of solutions at least for sufficiently large $K,\Omega$.

\begin{theorem}\label{conv_ginf}
Let assumptions \ref{lip_cont}-\ref{bound_L1}-\ref{monot}
 hold and in addition $\Arg a(y) < \Arg a_\infty $ for $y \in I$ - a non-null sub-interval of $[0,+\infty)$.
Then  for each $N >0$   $\exists K_N$ such that  $\forall K > K_N$ there are at least $N$ values  $\Omega_j \in \left(\frac{\check{\mu}}{\check{\rho}}K, \frac{\mu_\infty}{\rho_\infty}K\right)$, $j=1, \ldots , N$,
such that for each $(K,\Omega_j)$  the solution of
\eqref{SL_par}-\eqref{bc}-\eqref{dinf} exists.
\end{theorem}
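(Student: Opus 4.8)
The plan is to fix $K$ and to follow, as the single parameter $\Omega$ sweeps the interval $\bigl(\tfrac{\check\mu}{\check\rho}K,\tfrac{\mu_\infty}{\rho_\infty}K\bigr)$, the Pr\"ufer angle at the boundary of the (essentially unique) decaying solution, counting how many times the Neumann condition is met. First I would pass to the normalized equation \eqref{1SL_par} through \eqref{subs_tau}; by Proposition \ref{assump_subs} all structural assumptions survive, so I may work with $\mu\equiv1$. For every $\Omega$ in the \emph{open} interval one has $\bar\gamma_A(+\infty)=\mu_\infty(\Omega\rho_\infty-K\mu_\infty)<0$, hence by Corollary \ref{bliz} (equivalently Proposition \ref{asymp}) a decaying solution $Z^{\mathrm{dec}}_\Omega$ exists, and by Proposition \ref{pi2pi} its Pr\"ufer angle stays in $[\tfrac\pi2,\pi]$ on the half-line where $\bar\gamma_A<0$. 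Fixing the continuous branch of $\Arg Z^{\mathrm{dec}}_\Omega$ with its value at $+\infty$ in $(\tfrac\pi2,\pi)$, set $\Theta(\Omega)=\Arg Z^{\mathrm{dec}}_\Omega(0)$. Since \eqref{bc} reads $w(0)=0$, the pair $(K,\Omega)$ solves \eqref{SL_par}-\eqref{bc}-\eqref{dinf} exactly when $\Theta(\Omega)\equiv\tfrac\pi2\pmod\pi$; thus it suffices to show that the continuous function $\Theta$ assumes values spanning more than $N\pi$ once $K$ is large.

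Continuity of $\Theta$ in $\Omega$ follows from Proposition \ref{decaying_continuity}: the difference $\gamma_{A'}-\gamma_A=(\Omega'-\Omega)\rho$ has norm $\|\gamma_{A'}-\gamma_A\|_{01}=|\Omega'-\Omega|\bigl(\rho_\infty+\|\hat\rho\|_{C^0}+\|\hat\rho\|_{L_1}\bigr)\to0$ (finite by Assumptions \ref{lip_cont}-\ref{bound_L1}), and the cited continuity of $\Arg Z^{\mathrm{dec}}(y)$ at the fixed points $y\in[y_0,+\infty)$, together with continuous dependence of the initial value problem on the compact part $[0,y_0]$, makes $\Theta$ continuous on the open interval. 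For the upper part of its range, let $\Omega$ approach $\tfrac{\check\mu}{\check\rho}K$ from above: there $\gamma_A(y)=\rho(y)\bigl(\Omega-K\mu(y)/\rho(y)\bigr)\le0$ on all of $[0,+\infty)$, with equality only at the minimizer $\check y$. Hence by Propositions \ref{pi2pi} and \ref{corollary_Prufer}(ii) the decaying solution never leaves the second quadrant, so $\Theta$ stays in $[\tfrac\pi2,\pi]+o(1)$ there; in particular there is $\Omega_{\mathrm{low}}$, near the lower endpoint, with $\Theta(\Omega_{\mathrm{low}})>\tfrac\pi2-\pi$.

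The heart of the matter is the opposite, very negative, end of the range, reached for large $K$. Using the hypothesis $\Arg a(y)<\Arg a_\infty$ on $I$, fix a closed subinterval $J'$ of the interior of $I$ and a direction $\alpha$ with $\max_{J'}\Arg a(y)<\alpha<\Arg a_\infty$, and put $\Omega^\ast=K\tan\alpha$. Then $\tfrac{\check\mu}{\check\rho}<\tan\alpha<\tfrac{\mu_\infty}{\rho_\infty}$, so $\Omega^\ast$ is admissible, while on $J'$ one has $\gamma_{A^\ast}(y)=K\rho(y)\bigl(\tan\alpha-\mu(y)/\rho(y)\bigr)\ge c_1K$ for some $c_1>0$ (Remark \ref{rem_property_gammA}), whence in the $\tau$-variable $\bar\gamma_{A^\ast}\ge c_2K$ on the image $\bar J'$, of positive length. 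Choose $y_0$ with $\bar\gamma_{A^\ast}<0$ on $[y_0,+\infty)$ and $\bar J'\subset[0,y_0]$. Comparing \eqref{1SL_par} on $\bar J'$ with the constant-coefficient equation $\bar u''+c_2K\,\bar u=0$ via Proposition \ref{SL_comparison} shows that $\Arg Z^{\mathrm{dec}}_{\Omega^\ast}$ increases by at least $M_K\pi$ across $\bar J'$, with $M_K=\bigl\lfloor\sqrt{c_2K}\,|\bar J'|/\pi\bigr\rfloor-1\to\infty$. Outside $\bar J'$ the weakened monotonicity of Proposition \ref{corollary_Prufer}(iii) guarantees that the angle can fall by strictly less than $\pi$ across each of the two remaining arcs of $[0,y_0]$, so the total forward increase from $0$ to $y_0$ is at least $(M_K-2)\pi$. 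Since $\Arg Z^{\mathrm{dec}}_{\Omega^\ast}(y_0)\le\pi$, this forces $\Theta(\Omega^\ast)\le(3-M_K)\pi\to-\infty$ as $K\to\infty$.

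Finally, choose $K_N$ so large that $M_K>N+3$ for $K>K_N$; then $\Theta(\Omega^\ast)<\tfrac\pi2-N\pi$, while $\Theta(\Omega_{\mathrm{low}})>\tfrac\pi2-\pi$. As $\Theta$ is continuous on the interval $[\Omega_{\mathrm{low}},\Omega^\ast]$, the intermediate value theorem yields, for each of the $N$ distinct levels $\tfrac\pi2-m\pi$ ($m=1,\dots,N$), a point $\Omega_j$ with $\Theta(\Omega_j)=\tfrac\pi2-m\pi\equiv\tfrac\pi2\pmod\pi$; since $\Omega_{\mathrm{low}},\Omega^\ast\in\bigl(\tfrac{\check\mu}{\check\rho}K,\tfrac{\mu_\infty}{\rho_\infty}K\bigr)$, all $N$ (distinct) values $\Omega_j$ lie there and each is a surface-wave solution. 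I expect the main obstacle to be the quantitative winding bookkeeping of the third paragraph: turning the pointwise bound $\bar\gamma_{A^\ast}\ge c_2K$ into a rigorous lower bound of order $\sqrt K$ on the number of half-rotations through Sturm comparison, while simultaneously invoking Proposition \ref{corollary_Prufer}(iii) to exclude any compensating unwinding on the regions where $\gamma_{A^\ast}<0$. The continuity and the endpoint estimate are comparatively routine given the results of Section \ref{sec:1}; monotonicity of $\Theta$ in $\Omega$ is in fact available from Propositions \ref{SL_comparison} and \ref{ic_decaying_solution}, but is not needed for the count.
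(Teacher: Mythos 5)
Your proposal is correct in substance but follows a recognizably different route from the paper's. The paper's proof is a double-shooting/matching argument, inherited from Theorem \ref{div_ginf}: it takes $A_\infty$ collinear with $a_\infty$, uses the quasi-classical zero-count asymptotics \eqref{Nk} (cited from \cite{AM}) to show that for large $\beta$ (i.e.\ large $K$) the \emph{Neumann} solution $Z^0$ accumulates Pr\"ufer angle $>N\pi$ by the right end $c$ of the interval where $\gamma_{A_\infty}>0$, and then, as $s$ sweeps upward in $A_{\infty,s}=(\bar K,\bar\Omega-s)$, exploits the fact that the angles of $Z^0$ and of the decaying solution $Z^+$ at an interior point $\bar y$ rotate monotonically in \emph{opposite} directions (Propositions \ref{SL_comparison} and \ref{ic_decaying_solution}), so they coincide $(\mathrm{mod}\ \pi)$ at least $N$ times and the two pieces are concatenated as in \eqref{concat}. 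You instead shoot only from infinity: you carry the decaying solution all the way down to $y=0$, define $\Theta(\Omega)=\Arg Z^{\mathrm{dec}}_\Omega(0)$, and apply the intermediate value theorem to $\Theta$, replacing the cited formula \eqref{Nk} by an elementary constant-coefficient Sturm comparison across $\bar J'$ combined with the ``fall $<\pi$'' consequence of Proposition \ref{corollary_Prufer}(iii). Your route needs only continuity of $\Theta$ (Proposition \ref{decaying_continuity} plus continuous dependence on the compact part), not monotonicity, and avoids concatenation altogether; the paper's route has the advantage that the interior matching-point mechanism is exactly what carries over to Theorem \ref{div_ginf} (where the winding comes from oscillation at infinity rather than from large $K$) and its monotone rotation yields the finer conclusions ii)--iii) there.

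Two points in your write-up need tightening. The more serious one is the lower-endpoint anchor: for $\Omega$ strictly above $\frac{\check\mu}{\check\rho}K$ the coefficient $\gamma_A$ is positive near the minimizers of $\mu/\rho$, so Propositions \ref{pi2pi} and \ref{corollary_Prufer}(ii) do not apply on all of $[0,+\infty)$ and ``the decaying solution never leaves the second quadrant'' is literally false there; the ``$+o(1)$'' is doing real work. The fix is available from your own setup: at the endpoint value $\Omega_0=\frac{\check\mu}{\check\rho}K$ one has $\gamma_{A_0}\le 0$ on $[0,+\infty)$ and $\gamma_{A_0}\in\mathcal{G}^-(y_0)$ (the minimizer set is compact because $\check\mu/\check\rho<\mu_\infty/\rho_\infty$ under the hypothesis on $I$), so Remark \ref{der_uw} gives $uw\le 0$ along the decaying solution and hence $\Theta(\Omega_0)\in[\frac{\pi}{2},\pi]$; since your continuity argument is equally valid at $\Omega_0$, $\Theta(\Omega)\to\Theta(\Omega_0)$ as $\Omega\downarrow\Omega_0$, which yields the anchor $\Theta(\Omega_{\mathrm{low}})>\frac{\pi}{2}-\pi$. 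The minor point: Proposition \ref{SL_comparison} is stated with the coefficient inequality holding on a half-line, while you invoke it only on the finite interval $\bar J'$; this is harmless because the Pr\"ufer-angle comparison is interval-local, but it should be said (e.g.\ extend the comparison coefficient by $\min(\gamma_{A^\ast},c_2K)$ outside $\bar J'$).
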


\begin{remark}\label{rem_pos_mon}
The curves (2) and (3) in Fig. 1 meet  assumptions of the Theorem. $\qed$
\end{remark}

Finally there is a case,  in which
%%one can assume $K_0=0$ and find
for each $K>0$ one finds a numerable set of
$\Omega_j \in \left(\frac{\check{\mu}}{\check{\rho}}K, \frac{\mu_\infty}{\rho_\infty}K\right)$  such that the solution exists for $(K,\Omega_j)$.  It happens when the limit-case equation \eqref{SL_limit} is  oscillatory  (see Subsection \ref{subsec_oscil}).
%%We assume in the following Theorem that  equation \eqref{SL_limit}
%%satisfies assumptions \eqref{inf_sq_root} and \eqref{cond_o_int}.

%%\newpage

\begin{theorem}\label{div_ginf}
Let  assumptions \ref{lip_cont}-\ref{bound_L1}-\ref{monot}i)
 hold
 %%, and  $a(y)$ be positively monotonous at infinity.
 %%, hence
%%$\hat \gamma_\infty (y)$, defined by \eqref{SL_limit}, is positive on some
%% interval $[y_0,+\infty)$.
and    the limit-case equation \eqref{SL_limit} be oscillatory\footnote{We assume  \eqref{inf_sq_root} and \eqref{cond_o_int} to hold}.

Then for each $\bar K > 0$ there exists  a numerable set of
$\Omega_m \in \left(\frac{\check{\mu}}{\check{\rho}}\bar K, \frac{\mu_\infty}{\rho_\infty}\bar K\right)$, $m=1, \ldots ,$
such that:

i) for $A_m=(\bar K,\Omega_m)$  the  solution  of
\eqref{SL_par}-\eqref{bc}-\eqref{dinf} exists;

ii)  $\Omega_m$ increase with $m$ and accumulate (only) to $\bar \Omega = \frac{\mu_\infty}{\rho_\infty}\bar K$;

iii) for the vector   solutions $Z(y; A_m)$ there holds  \[\Arg Z(y; A_m) \in \left[\left(m-\sfrac{1}{2}\right)\pi, m \pi\right]  \ \mbox{for $y$ sufficiently large.}\]
%%For each $m$ such solution is unique (up to a multiplier).
\end{theorem}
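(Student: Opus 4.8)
The plan is to reduce to the normal form $\mu\equiv1$ and then count surface waves as the crossings of the Neumann condition by the Pr\"ufer angle of the decaying solution, using the oscillation of the limit-case equation as the engine that produces infinitely many crossings. First I would apply Proposition \ref{assump_subs} and substitution \eqref{subs_tau} to pass to \eqref{1SL_par} with $\mu\equiv1$; Assumptions \ref{lip_cont}--\ref{monot} and the oscillation hypotheses \eqref{inf_sq_root}--\eqref{cond_o_int} are preserved. Fix $\bar K>0$, write $A=(\bar K,\Omega)$, $\bar\Omega=\frac{\mu_\infty}{\rho_\infty}\bar K$, $J=\bigl(\frac{\check\mu}{\check\rho}\bar K,\bar\Omega\bigr)$. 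A direct computation gives $\gamma_{(\bar K,\bar\Omega)}(y)=\frac{\bar K}{\rho_\infty}\,\hat\gamma_\infty(y)$ with $\hat\gamma_\infty$ as in \eqref{ginfin}, so the limit-case equation \eqref{SL_limit} is exactly the member $\Omega=\bar\Omega$ of the family and is oscillatory by hypothesis; note that Assumption \ref{monot}i) forces $\hat\gamma_\infty>0$ on a tail. For $\Omega\in J$ one has $\gamma_A(+\infty)=\Omega\rho_\infty-\bar K\mu_\infty<0$, so $\gamma_A\in\mathcal G^-(y_0)$ for some $y_0=y_0(\Omega)$ and a decaying solution exists by Proposition \ref{asymp} and Corollary \ref{bliz}, while $\Omega>\frac{\check\mu}{\check\rho}\bar K$ makes $\gamma_A$ positive somewhere. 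Let $V(\cdot;\Omega)$ solve \eqref{SL_par} with $w(0)=0$, i.e.\ $\Arg V(0;\Omega)=\sfrac{\pi}{2}$, so that \eqref{bc} holds automatically; a surface wave exists precisely when $V$ also satisfies \eqref{dinf}, that is, when $\Arg V(y;\Omega)$ tends as $y\to+\infty$ to the decaying eigendirection, whose Pr\"ufer angle lies in $\bigl((m-\sfrac12)\pi,m\pi\bigr)$ for some integer $m\ge1$ by Proposition \ref{pi2pi} and its $\pi$-translates.

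Next I would introduce the terminal angle $\Theta(\Omega)=\lim_{y\to+\infty}\Arg V(y;\Omega)$, which exists because on the negative tail the angular equation \eqref{eq_Pruf1} has hyperbolic equilibria toward which $\Arg V$ converges. Since $\Omega_1<\Omega_2$ gives $\gamma_{A_1}\le\gamma_{A_2}$ pointwise, the comparison Proposition \ref{SL_comparison} with matched data at $y=0$ yields $\Arg V(y;\Omega_1)\le\Arg V(y;\Omega_2)$ for all $y\ge0$, hence $\Theta$ is non-decreasing on $J$. For the lower end, as $\Omega\downarrow\frac{\check\mu}{\check\rho}\bar K$ one has $\gamma_A\le0$ on all of $[0,+\infty)$, so by Proposition \ref{corollary_Prufer}ii) the quadrants are invariant and the angle cannot wind; thus $\Theta$ stays bounded below as $\Omega$ approaches the lower end of $J$, which will pin the accumulation of the $\Omega_m$ away from $\frac{\check\mu}{\check\rho}\bar K$.

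The crux is the upper end: I claim $\Theta(\Omega)\to+\infty$ as $\Omega\to\bar\Omega^-$. The mechanism is that the sign-change abscissa $y_0(\Omega)$ recedes to $+\infty$ as $\gamma_A(+\infty)\uparrow0$, so on a growing initial interval $\gamma_A$ is close to $\frac{\bar K}{\rho_\infty}\hat\gamma_\infty>0$, where the oscillatory limit equation winds arbitrarily much: given $M$, oscillation yields $L$ with $\Arg V_{\mathrm{lim}}(L)>M+\pi$, and by continuous dependence of solutions on $[0,L]$ together with Proposition \ref{SL_comparison} one gets $\Arg V(L;\Omega)>M$ for $\Omega$ close to $\bar\Omega$; the weakened monotonicity of Proposition \ref{corollary_Prufer}iii) then prevents the angle from dropping back below the multiples of $\pi$ it has crossed, so the convergent tail changes $\Theta$ by at most $\pi$ and $\Theta(\Omega)>M$. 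The main obstacle is exactly this transfer of winding from the oscillatory limit equation to the decaying solution of the perturbed equation while the negative tail $[y_0(\Omega),+\infty)$ escapes to infinity; the delicate point is to bound the loss of angle on that tail uniformly, which is where Proposition \ref{corollary_Prufer}iii) and the hyperbolic structure of the tail equilibria are essential, with Proposition \ref{decaying_continuity} controlling the decaying branch on compact subintervals of $J$.

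Finally, with $\Theta$ non-decreasing, bounded below near $\frac{\check\mu}{\check\rho}\bar K$ and tending to $+\infty$ at $\bar\Omega$, a standard Sturm counting finishes the proof: as $\Omega$ increases, the stable-equilibrium band of $V$ at infinity increases by one exactly when $V$ momentarily coincides with the decaying direction, i.e.\ at an $\Omega$ for which \eqref{dinf} holds, and continuity of $V$ on compact $y$-intervals turns each band transition into a genuine $\Omega_m\in J$. This produces an increasing sequence $\Omega_m$ with the surface-wave solution for $A_m=(\bar K,\Omega_m)$, giving i); boundedness at the lower end together with divergence at $\bar\Omega$ forces the $\Omega_m$ to accumulate only at $\bar\Omega$, giving ii); and at $\Omega_m$ the decaying solution converges to the $m$-th translate of the decaying eigendirection, whose angle lies in $[(m-\sfrac12)\pi,m\pi]$, giving iii).
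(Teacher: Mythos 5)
Your route is genuinely different from the paper's: you shoot only from the boundary and try to count eigenvalues as jumps of the terminal Pr\"ufer angle $\Theta(\Omega)=\lim_{y\to\infty}\Arg V(y;\Omega)$, whereas the paper matches, at a \emph{finite} point $\bar y$, the boundary solution $Z^0$ and the decaying solution $Z^+$, which rotate in opposite directions as the parameter varies (Propositions \ref{SL_comparison} and \ref{ic_decaying_solution}), and then applies the intermediate value theorem to two functions that are \emph{continuous} in the parameter (Proposition \ref{decaying_continuity} supplies continuity of the decaying branch). The genuine gap in your scheme sits exactly where you write ``standard Sturm counting finishes the proof''. The function $\Theta$ is \emph{not} continuous in $\Omega$: for non-eigenvalue $\Omega$ it equals $n\pi+\theta_+(\Omega)$ with $\theta_+\in(0,\pi/2)$ (alignment with the growing direction, which needs the Corollary to Proposition \ref{asymp}, not just ``hyperbolic equilibria''), and it jumps at eigenvalues; so no intermediate value argument applies to $\Theta$ itself, and a priori $\Theta$ could jump across the band $\left((m-1/2)\pi,m\pi\right)$ without any $\Omega$ realizing a decaying solution. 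The statement ``the band increases by one exactly when $V$ coincides with the decaying direction'' is precisely what must be proved. It can be: show that the set of $\Omega$ for which $V(\cdot;\Omega)$ is non-decaying with angle locked in $(n\pi,n\pi+\pi/2)$ is open --- pick $y_1$ and $\delta>0$ with $\gamma_A<-\delta$ on $[y_1,\infty)$ uniformly for nearby $\Omega$, use continuous dependence on $[0,y_1]$ and quadrant invariance (Proposition \ref{corollary_Prufer}ii)) to trap the angle below $n\pi+\pi/2$ for all nearby $\Omega$ --- and conclude that at $\sigma_m=\inf\{\Omega:\Theta(\Omega)>m\pi\}$ the solution must decay, with angle trapped in $[(m-1/2)\pi,m\pi]$. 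This argument is absent from your proposal, and without it the counting does not go through.

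The second gap is at the lower end. Your claim that $\gamma_A\le 0$ on all of $[0,+\infty)$ as $\Omega\downarrow\frac{\check{\mu}}{\check{\rho}}\bar K$ is false for $\Omega$ \emph{inside} $J$: it holds only at and below that endpoint, so quadrant invariance cannot be invoked there. Moreover what you actually need is not that ``$\Theta$ stays bounded below'' but an upper bound $\Theta(\Omega)<\pi$ for some admissible $\Omega$; otherwise the $m=1$ transition (and possibly finitely many low bands) might fail to occur inside $J$, and claim iii) with indexing $m=1,2,\ldots$ would be lost. The clean repair is to run your monotone family over all of $(0,\bar\Omega)$ rather than over $J$: for $\Omega$ strictly below $\frac{\check{\mu}}{\check{\rho}}\bar K$ one really has $\gamma_A<0$ everywhere, hence $\Theta(\Omega)\le\pi/2$, so every $S_m=\{\Omega:\Theta(\Omega)>m\pi\}$ is a proper subinterval and every transition $\sigma_m$, $m\ge 1$, exists; these $\sigma_m$ then lie automatically in $J$ because, by Theorem \ref{neg_monot} (Remark \ref{rem_A<a-inf}), no eigenvalue can satisfy $\Omega\le\frac{\check{\mu}}{\check{\rho}}\bar K$. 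This is implicitly how the paper proceeds: its parameter $s$ runs all the way to $\bar\Omega$, i.e.\ $\Omega$ down to $0$. With these two repairs --- the openness/trapping argument at transitions and the extension of the parameter range --- your terminal-angle scheme does yield i)--iii); as written, both critical steps are asserted rather than proved.
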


\begin{remark}
Assumptions \ref{lip_cont}-\ref{bound_L1}-\ref{monot}i)
 hold for curve (3) in Fig. 1, but the oscillatory property for the limit-case equation can not be concluded from the curve only, since it also depends on its parametrization. $\qed$
\end{remark}

\begin{figure}
  \centering
\includegraphics[width=200pt]{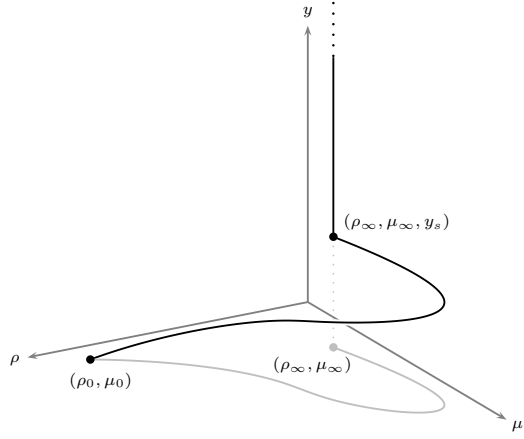}
\caption{The functions $\rho(y)$ and $\mu(y)$ in the  homogeneous substrate example become constant when $y\geq y_s$, as illustrated by the curve in black. Note that its projection (in gray) on the $(\rho,\mu)$-plane is a curve, which  exhibits negative monotonicity at infinity.}
\label{fig:2}
\end{figure}

\subsection{Homogeneous substrate example} This is a particular case, in which the properties of the medium become depth-independent starting from some depth.
For the  model under discussion this means existence of $y_s$ such that $\mu(y)$ and $\rho(y)$ are constant on the interval $[y_s, +\infty)$: $\mu(y) \equiv \mu_s, \ \rho(y) \equiv \rho_s$ on $[y_s, +\infty)$ (see Fig. 2).

We denote  $a_s=(\rho_s, \mu_s)$.
 Then  $a_\infty=\lim_{y \to \infty}a(y)=a_s$ and $\hat a(y)=a(y)-a_\infty$ vanishes on $[y_s, +\infty)$.

If $\forall y \in [0, +\infty): \ \Arg a(y) \geq \Arg a_s$ or, the same
\[\forall y \in [0, +\infty): \ \frac{\mu(y)}{\rho(y)} \geq \frac{\mu_s}{\rho_s},\]
then we are under assumptions of Theorem \ref{neg_monot} and solutions
of \eqref{SL_par}-\eqref{bc}-\eqref{dinf} do not exist.

If $\frac{\mu(y)}{\rho(y)} < \frac{\mu_s}{\rho_s}$ on some non-null subinterval of
$[0,+\infty)$, then we fall under assumptions of Theorem \ref{conv_ginf} and hence its claim holds.

\section{Proofs}\label{sec_proofs}
Since substitution \eqref{subs_tau} transforms parametric equation \eqref{SL_par} into its standard form \eqref{1SL_par}  and  Assumptions  \ref{lip_cont},\ref{bound_L1},\ref{monot} are maintained under  \eqref{subs_tau}, we may take,   without  loss  of  generality,   $\mu(y)
 \equiv 1$ in \eqref{SL_par}.

The {\it proof of Theorem \ref{neg_monot}} is easy. Pick some $A=(K,\Omega)$.
There are two options: $\Arg a_\infty \leq  \Arg  A$ or $\Arg a_\infty > \Arg  A$.

In the first case,  by monotonicity and continuity assumptions,  the coefficient
$\gamma_A(y)$ in equation \eqref{SL_par} is non-negative  on some interval $[y_0,+\infty)$.
 Then,  by Proposition~\ref{ellip}, there exists a fundamental system of solutions of the form
 \eqref{u_osc}  and none of  the  solutions of \eqref{SL_par} tend to the origin  as $y \to +\infty$.

If $\Arg A < \Arg a_\infty \leq \Arg a(y)  \ \forall y \in [0,+\infty)$, then $\gamma_A(y) < 0$ on $[0,+\infty)$.  By \eqref{dup},  for  a solution $Z(y,A)=\left(
                                        \begin{array}{c}
                                          w \\
                                          u \\
                                        \end{array}
                                      \right)$
there holds $\frac{d}{dy}\left(u(y)w(y)\right)>0$.
 This enters in contradiction with the boundary conditions \eqref{bc}-\eqref{dinf}, according to which $u(0)w(0)=0$ and $\lim_{y \to +\infty}\left(u(y)w(y)\right)=0$.

The latter reasoning  also validates the claim of Remark \ref{rem_A<a-inf}.

\vspace{2mm}

{\it Proof of Theorem~\ref{div_ginf}}.
We start with a {\it sketch of the proof}.

Take a vector of parameters $A_\infty=(\bar K, \bar \Omega)$ collinear to $a_\infty=(\rho_\infty, \mu_\infty)$ and consider its perturbation
$A_{\infty , s}=(\bar K, \bar \Omega-s)$. It is immediate to see that for each $s>0$
 equation \eqref{SL_par} with  $A=A_{\infty , s}$
 and the coefficient
 \begin{equation}\label{gam_Am}
 \gamma_{A_{\infty, s}}(y)=\gamma_{A_\infty}(y)- s \rho(y)=- s \rho_\infty +(\bar \Omega - s)\hat \rho(y)-\bar K \hat \mu(y)
\end{equation}
 meets the assumptions of Proposition \ref{asymp},  and hence
  the equation
\begin{equation}\label{eq_gamma_eps}
  u''+\gamma_{A_{\infty,\bar s}}(y)u=0
\end{equation}
 possesses a decaying solution
$Z^+(y,A_{\infty, s})$.

Simultaneously we consider  the solutions $Z^0(y,A_{\infty , s})$
of the same equation with  the boundary condition \eqref{bc}.
The goal is to detect the values  $s>0$,  for which the  solutions $Z^0(y,A_{\infty , s})$ and $Z^+(y,A_{\infty, s})$ {\it meet} at some intermediate point $\bar y \in [0,+\infty)$, i.e admit at $\bar y$ the same value (mod $\pi$). In such a case  they (or their opposites) can be  concatenated into solutions of \eqref{SL_par}-\eqref{bc}-\eqref{dinf}.
The possibility of such {\it meeting} follows from Propositions \ref{SL_comparison} and \ref{ic_decaying_solution}, according to which  for a sufficiently large  intermediate point $\bar y \in [0,+\infty)$   the vectors $ Z^0(\bar y,A_{\infty , s})$  and   $ Z^+(\bar y,A_{\infty , s})$ rotate in opposite directions  as $s$ grows
 from some $\bar s>0$.

One can assume (increasing $\bar y$ if necessary) that  $\forall s \geq \bar s$
one has $\gamma_{A_{\infty, s}}(y)<0$ on $(\bar y, +\infty)$ and
$\Arg Z^+(\bar y,A_{\infty , s}) \in (\sfrac{\pi}{2},\pi)$.
On the other hand, for small $s>0, \ \Arg Z^0(\bar y,A_{\infty , s})$ is close to  $\Arg Z^0(\bar y,A_{\infty})$, which, due to the oscillation property of the limit-case equation, tends to $+\infty$ as $\bar y \to +\infty$. Therefore for each natural $m$ one can find (again increasing $\bar y$ when necessary) small $\bar s>0$  such that $\Arg Z^0(\bar y,A_{\infty , \bar s}) > \pi m$.  As $s$ will grow from $\bar s$ to $\bar \Omega$, $\Arg Z^0(\bar y,A_{\infty , \bar s})$ will decrease from the value greater than $ \pi m$  to the value less than $\pi$ and during this  evolution  it  becomes equal  (mod $\pi$)  to $\Arg Z^+(\bar y,A_{\infty , s})$ for $m$ distinct values of  $s$.

Now we provide the detailed proofs of the statements i)-iii) of the Theorem.

i) Pick $\bar K>0$ and take $\bar \Omega = \frac{\mu_\infty}{\rho_\infty}\bar K$, so that $A_\infty=\left(\bar K,\bar \Omega\right)$ is collinear with $a_\infty$.
 Consider  the limit-case  equation  \eqref{SL_limit} with  the parameter $A_\infty$ and choose  the solution $Z^0(\cdot;A_\infty)$,   which satisfies  the boundary  condition \eqref{bc}.
As long as  equation \eqref{SL_limit} is oscillatory,  $\Arg Z^0(y;A_\infty) $ tends to infinity as $y \to +\infty$. Hence,   {\em for each  natural} $m$   $\exists y_m \in [0,+\infty)$ such that
$\Arg Z^0(y_m;A_\infty) > \pi m$.

By the continuity of the trajectories of \eqref{SL_par} with respect to the parameter $A$,  one can find $\bar s >0$ such that for any $ s \in (0, \bar s]$ and for
$A_{\infty,s} =(\bar K, \bar \Omega -s)$   there holds
 $\Arg Z^0(y_m;A_{\infty,s}) > \pi m $.

    %%Let us fix $ s \in (0,\bar \eps)$.
        For the function $\gamma_{A_{\infty,\bar s}}(y)$   defined by \eqref{gam_Am}  one can find $\bar y \geq y_m$ such that $\gamma_{A_{\infty,\bar s}}(y)<0$  on $[\bar y,+\infty)$.
It follows from Remark \ref{corollary_Prufer}iii)  that
$\Arg Z^0(\bar y;A_{\infty,\bar s})$ $> \pi m$.

 The second-order equation \eqref{eq_gamma_eps} for  $s=\bar s$
 meets the assumptions of Proposition \ref{asymp} and hence has  the  decaying solution $Z^+(y;A_{\infty,\bar s})$.
%with   $\Arg Z^+(y_s;A_{\infty,s}) \in
%\left(\pi/2, \pi \right) \ (\mbox{mod} \ \pi)$  and   .
By Proposition~\ref{pi2pi},   there holds:
\[\forall y \geq \bar y: \ \Arg Z^+(y;A_{\infty,\bar s}) \in
\left(\sfrac{\pi}{2}, \pi  \right) \ (\mbox{mod} \ \pi).\]

 Letting $s$ grow from $ \bar s$ towards $\bar \Omega$,  we note that the values of $\gamma_{A_{\infty,s}}(y)=\gamma_{A_\infty}(y)-s \rho(y)$ on $[0,+\infty)$ diminish; in particular,  $\gamma_{A_{\infty,s}}(y)<0$ for  $y \in [\bar y,+\infty)$ for all $s \geq \bar s$.
According to Proposition \ref{SL_comparison},  the function  \linebreak $s \to \Arg Z^0(\bar y;A_{\infty,s})$ decreases monotonously from the value $\Arg Z^0(\bar y;A_{\infty,\bar s})>\pi m$ to  the  value  $\Arg Z^0(\bar y;A_{\infty,\bar \Omega})\in \left(0, \pi \right)$.

Consider now the decaying solutions $Z^+(y;A_{\infty,s})$. Proposition \ref{ic_decaying_solution} implies that  for chosen $\bar y$   $\Arg Z^+(\bar y;A_{\infty,s})$
grows with the growth of $s$, remaining $(\mbox{mod} \ \pi)$ in the interval $(\sfrac{\pi}{2}, \pi)$.
During the evolution there occur (at least)  $m$ values of  $s_j, \ j=1, \ldots , m,$ for which
\[\Arg Z^+(\bar y;A_{\infty,s_j})=\Arg Z^0(\bar y;A_{\infty,s_j})-\pi n \ (n \ \mbox{\rm  - integer}) . \]
Then the concatenations
\begin{equation}\label{concat}
  Z(y;A_{\infty,s_j})=\left\{
                             \begin{array}{ll}
                              & Z^0( y;A_{\infty,s_j}), \  \hbox{$y \leq \bar y$,} \\
                               &(-1)^nZ^+( y;A_{\infty,s_j}), \  \hbox{$y \geq \bar y$,}
                             \end{array}
                           \right.
\end{equation}
are the decaying solutions of the corresponding equations
\[u''+\left((\bar \Omega -s_j)\rho(y)-\bar K\mu(y)\right)u=0, \]
and \eqref{concat}  satisfies  the boundary condition
\eqref{bc}-\eqref{dinf}.

ii) Let $\tilde \Omega \in (0,\bar \Omega)$ be a limit point of
$\Omega_n=\bar \Omega -s_n, \ n=1, \ldots $. Then $\tilde \Omega=\bar \Omega -\tilde s< \bar \Omega$.

Consider $\gamma_{A_{\infty,\tilde s}}$. There exists $\tilde y$, such that
$\gamma_{A_{\infty,\tilde s}} < 0$ on $[\tilde y, +\infty)$.
Pick the decaying solution
$Z^+(y;A_{\infty, \tilde  s})$.  According to the aforesaid $\forall y \in [\tilde y, +\infty)$:
$\Arg Z^+(y;A_{\infty, \tilde  s}) \in (\sfrac{\pi}{2}, \pi) \ (\mbox{mod} \ \pi)$.

Consider the solution $Z^0(y;A_{\infty,\tilde s})$, which meets the initial condition \eqref{bc}.
If $\Arg Z^+(\tilde y;A_{\infty,\tilde s}) \neq \Arg Z^0(\tilde y;A_{\infty,\tilde s}) \ (\mbox{mod} \ \pi)$, then the inequality  holds for values of $s$ close to $\tilde s$,   and in particular  for all $s_n$,  but finite number of them, and this results in a contradiction.

 Let $\Arg Z^0(\tilde y;A_{\infty,\tilde s})-\Arg Z^+(\tilde y;A_{\infty,\tilde s})=\pi m $. Since   $\Arg Z^0(\tilde y;A_{\infty,s})-\Arg Z^+(\tilde y;A_{\infty, s})$ decreases with the growth of $s$, one concludes: \[\Arg Z^+(\tilde y;A_{\infty, s}) \neq \Arg Z^0(\tilde y;A_{\infty, s}) \ (\mbox{mod} \ \pi)\]
for all $s \neq \tilde s$ from a  sufficiently small neighborhood of $\tilde s$ and hence for all $s_n$ but a finite number of them, which leads us to the same contradiction.

 iii) By the construction  provided in i), for each natural $m$, there exist $A_m=(\bar K, \Omega_m)$ and the  decaying solution $Z(y,A_m)$ of \eqref{SL_par}-\eqref{bc}-\eqref{dinf}, which converges to the origin in such a way that $\Arg Z(y,A_m) \in [\pi (m-\sfrac{1}{2}), \pi m ]$ for sufficiently large $y$.

 To prove its uniqueness,  we   assume on the contrary that there exists another $A'=(\bar K, \Omega')$ and a decaying solution of \eqref{SL_par}-\eqref{bc}-\eqref{dinf} such that for  $y \in [y_0 , +\infty)$
 $\gamma_{A_m}(y)<0, \gamma_{A'}(y)<0$  and
  both $\Arg Z(y,A'), \Arg Z(y,A_m)$  belong to $[\pi (m-\sfrac{1}{2}), \pi m ]$ for  $y \in [y_0 , +\infty)$.

  Let  for example $\Omega' > \Omega_m$. Then $\gamma_{A_m}(y) < \gamma_{A'}(y)$ and
  hence  \linebreak $\Arg Z(y_0,A_m) <  \Arg Z(y_0,A')$. This enters in contradiction with the result of Proposition \ref{ic_decaying_solution}.

{\it Proof of Theorem~\ref{conv_ginf}}. Let us
pick $\bar K>0$ and take $\bar \Omega=\bar \Omega = \frac{\mu_\infty}{\rho_\infty}\bar K$, so that $A_\infty=\left(\bar K,\bar \Omega\right)$ is collinear with $a_\infty$. By assumptions of the Theorem,   the function $\gamma_{A_\infty}(y)$ admits positive values on some non-null subinterval $(\underline c,c) \subset  [0,+\infty)$. The same holds true for  $\gamma_{\beta  A_\infty}$ with $\beta  A_\infty=(\beta \bar K, \beta \bar \Omega), \ \beta >0$.

  Our proof can  be accomplished along the lines of the proof of Theo\-rem~\ref{div_ginf} if  one proves that for any  $N$  there exists $\beta_N>0$, for which the solution $Z^0(y,\beta_N  A_\infty)$ with  initial condition \eqref{bc} satisfies  $\Arg Z^0(c,\beta_N  A_\infty) > \pi N$.

Consider the equation
%%\begin{equation}\label{beta_A}
  \[u''(y)+\gamma_{\beta  A_\infty}(y)u=u''(y)+\beta \gamma_{A_\infty}(y)u=0\]
%%\end{equation}
on the interval $[0,c]$.
It is known (\cite[\S A.3, \S A.5]{AM}) that  the number of zeros of the  solution $u(y,\gamma_{\beta A_\infty}(\cdot))$, or, the same, the increment of  Pr\"ufer's angle
\[\Arg Z(y,\gamma_{\beta A_\infty}(\cdot))-\Arg Z(0,\gamma_{\beta A_\infty}(\cdot))\] grows    as
\begin{equation}\label{Nk}
  \pi^{-1}\beta^{\sfrac{1}{2}} \int_0^y \left(\max(\gamma_{A_\infty}(\eta),0)\right)^{1/2}d\eta   +O(\beta^{\sfrac{1}{3}})
\end{equation}
as $\beta \to +\infty$.
Hence choosing sufficiently large $\beta >0$,  we can get a solution   $Z^0(y;\beta A_\infty)$ with  the boundary  condition \eqref{bc} and such that
$\Arg Z^0(c;\beta  A_\infty) > N \pi$.
It follows from Proposition \ref{corollary_Prufer}  that
$Z^0(y;\beta A_\infty) > N \pi , \ \forall y >c$.

The rest of the proof follows the proof of Theorem \ref{div_ginf}.  One can also conclude from
\eqref{Nk} that  $N(k) \sim  k$  as $k \to \infty$,  where  $N(k)$ is the number of surface wave solutions with a  given wave number $k=K^{1/2}$.

%%If we change  $\bar A=(\bar K, \bar \Omega)$ to $\bar A_s=(\bar K, \bar \Omega-s)$, %%with $s>0$ sufficiently small,  then: i) $\gamma_{\bar A_s}(y)$ is $C^0$-close to %%$\gamma_{\bar A}(y)$, therefore  its positivity on a nontrivial subinterval of %%$[0,+\infty)$; ii) for any $s>0$ the function $\gamma_{\bar A_s}(y) < 0$ on some %%interval $[y_s, +\infty)$.


\begin{thebibliography}{KKKK}
\bibitem{AB} Achenbach J. D., Balogun, O., Antiplane surface waves on a half-space with depth-dependent properties. Wave Motion. 47(1), 59-65 (2010).
\bibitem{Arn} Arnold V.I., The Sturm theorems and symplectic geometry, Functional Analysis and Its
Applications, 19, 251-259 (1985).
\bibitem{AM} Atkinson F.V., Mingarelli A.B., {\it Multiparameter Eigenvalue Problems. Sturm-Liouville Theory}, CRC Press, 2011.
\bibitem{Bel} Bellman R., {\it Stability theory of differential equations}, McGraw-Hill Book Company Inc., New York, 1953.
\bibitem{BGKP} Biryukov S.V., Gulyaev Y.V., Krylov V.V., Plessky V.P. Surface Acoustic Waves in Inhomogeneous Media. Springer-Verlag, Berlin-Heidelberg, 1995.
%\bibitem{CoL} E.Coddington, N. Levinson, {\it Ordinary Differential Equations}, Ch. 3, \S 8.
%%\bibitem{Cop} Coppel W.A., {\it Dichotomies in Stability Theory}, Springer-Verlag, 1978, Ch. 5,6.
%%\bibitem{Dem} Demidovich V.P., {\it Metody po matematicheskoy teorii ustoichivosti}, Fizmatlit, 1967, Ch. 5, \S\S 5-9.
\bibitem{Fed} Fedoryuk M.V., {\it Asymptotic Analysis}, Springer-Verlag, Berlin-Heidelberg, 1993.
\bibitem{Har} Hartman Ph., {\it Ordinary differential equations}, SIAM, Philadelphia, 2002.
\bibitem{MA} Mahamood R.M., Akinlabi E.T. Functionally Graded Materials. Topics in Mining, Metallurgy and Materials Engineering. Springer International Publishing, Cham, 2017
\bibitem{Shu1} Shuvalov A.L., O.Poncelet, S.V.Golkin, Existence and spectral properties of shear horizontal surface acoustic waves in vertically periodic half-spaces, Proc. Royal Soc. A, 465(2105), 1489-1511 (2009).
\bibitem{Shu2} Shuvalov A.L., A.A. Kutsenko, M.E. Korotyaeva, O.Poncelet, Love waves in a coated vertically periodic substrate, Wave Motion 50(4), 809-820 (2013).
\bibitem{Ti} Ting, T.C.T., Existence of anti-plane shear surface waves in anisotropic elastic half-space with depth-dependent material properties, Wave Motion 47(6), 350–357 (2010).
\bibitem{XFK} Xiaoshan Cao, Feng Jin, Kishimoto K., Transverse shear surface wave in a functionally graded material infinite half space, Phil. Mag. Letters, 92(5), 245-253 (2012).
\end{thebibliography}
\end{document}